\documentclass{amsart}
\usepackage[utf8]{inputenc}
\usepackage[T1]{fontenc}
\usepackage{lmodern}
\usepackage[english]{babel}
\usepackage{amsmath}
\usepackage{amsfonts}
\usepackage{amssymb}
\usepackage[nobysame]{amsrefs}
\usepackage{graphicx}
\usepackage{mathrsfs}
\usepackage{tikz}
\usepackage[colorlinks=false,linktoc=page,pdftitle={On the inverse Klain map}]{hyperref}
\usepackage{paralist}

%-------------------------------------------------------------------------------
\theoremstyle{plain}
\newtheorem{theorem}{Theorem}[section]

\newtheorem{lemma}[theorem]{Lemma}
\newtheorem{corollary}[theorem]{Corollary}
\newtheorem*{theorem*}{Theorem}
\newtheorem*{corollary*}{Corollary}
\newtheorem*{QuestionA}{Question A}
\newtheorem*{QuestionB}{Question B}

\theoremstyle{definition}
\newtheorem{definition}[theorem]{Definition}

\newtheorem{remark}[theorem]{Remark}

%-------------------------------------------------------------------------------
\newcommand{\CC}{\mathbb{C}}
\newcommand{\DD}{\mathbb{D}}
\newcommand{\FF}{\mathbb{F}}

\newcommand{\RR}{\mathbb{R}}

\newcommand{\calF}{\mathcal{F}}
\newcommand{\calG}{\mathcal{G}}
\newcommand{\calH}{\mathcal{H}}
\newcommand{\calK}{\mathcal{K}}

\newcommand{\calM}{\mathcal{M}}
\newcommand{\calN}{\mathcal{N}}
\newcommand{\calP}{\mathcal{P}}

\newcommand{\rmR}{\mathrm{R}}
\newcommand{\rmr}{\mathrm{r}}

\DeclareMathOperator{\Val}{Val}
\DeclareMathOperator{\vol}{vol}
\DeclareMathOperator{\img}{im}

\DeclareMathOperator{\Klain}{Kl}

\DeclareMathOperator{\aff}{aff}

\DeclareMathOperator{\Grass}{Gr}

\usetikzlibrary{matrix,arrows}
\numberwithin{equation}{section}

%-------------------------------------------------------------------------------
\begin{document}
	\title[On the inverse Klain map]{On the inverse Klain map}

	\author{Lukas Parapatits}
	\address
	{
		\begin{flushleft}
			Vienna University of Technology \\
			Institute of Discrete Mathematics and Geometry \\
			Wiedner Hauptstraße 8--10/1046 \\
			1040 Wien, Austria \\
		\end{flushleft}
	}
	\email{lukas.parapatits@tuwien.ac.at}

	\author{Thomas Wannerer}
	\address
	{
		\begin{flushleft}	
			ETH Z\"urich\\
			Departement Mathematik \\
			HG F 28.3\\
			R\"amistrasse 101\\
			8092 Z\"urich, Switzerland \\
		\end{flushleft}
	}	
	
	\email{thomas.wannerer@math.ethz.ch}

	\subjclass[2010]{Primary 52A20; Secondary 52B45, 52A40}

	\begin{abstract}
		The continuity of the inverse Klain map is investigated and the class of centrally symmetric convex bodies
		at which every valuation depends continuously on its Klain function is characterized.
		Among several applications, it is shown that McMullen's decomposition is not possible in the class of
		translation-invariant, continuous, positive valuations.
		This implies that there exists no McMullen decomposition for translation-invariant, continuous Minkowski valuations,
		which solves a problem first posed by Schneider and Schuster.
	\end{abstract}
	
	\maketitle

	\section{Introduction}
		The Klain map was introduced by Klain in \cite{klain00} and has become an important tool in the theory
of even, translation-invariant, continuous valuations \cites{alesker01, alesker04, alesker_bernstein04, alesker_etal11},
which in turn led to spectacular advances in (Hermitian) integral geometry \cites{bernig09a,bernig09b, bernig_fu11,bernig_etal}.
The crucial fact that the Klain map is injective, which was first proved by Klain,
has been used to solve various problems related to even valuations and to provide different approaches to known results,
see e.g.\ \cites{klain00,schneider97, alesker00, alesker01}.
It is therefore of great interest to study the inverse Klain map.

Let $\Val_i^+$ denote the space of even, $i$-homogeneous, translation-invariant, continuous valuations
(see Section \ref{secdef} for precise definitions).
Given $\phi\in\Val^+_i$ and an $i$-dimensional, linear subspace $E$, we denote by $\phi_E$ the restriction of $\phi$ to $E$.
As a valuation on $E$, $\phi_E$ is even,  $i$-homogeneous, translation-invariant, continuous,
and hence, by a well-known theorem by Hadwiger \cite{Hadwiger1957}, proportional to the $i$-dimensional volume on $E$.
If we denote this proportionality factor by $\Klain_\phi(E)$, then
	$$\phi_E = \Klain_\phi(E) \vol_i.$$
The corresponding function  $\Klain_\phi \colon \Grass_i \to \CC$ on the Grassmannian of $i$-dimensional, linear subspaces
is called the Klain function of $\phi$.
The linear map $\Klain \colon \Val_i^+ \to C(\Grass_i), \ \phi \mapsto \Klain_\phi$ is called the Klain map.

In this article we investigate the dependence of $\phi$ on its Klain function,
i.e.\ the dependence of $\phi$ on its values on $i$-dimensional convex bodies.
Given a convex body $K$, it is important to know whether $\phi(K)$ depends \emph{continuously} on $\Klain_\phi$.
Let $\|\,\cdot\,\|$ denote the usual supremum-norm and $\calK(\RR^n)$ the set of convex bodies in $\RR^n$.

\begin{QuestionA}[Continuity of the inverse Klain map]
	Let $K\in\calK(\RR^n)$ be a convex body. Does there exist a constant $C\geq 0$ (depending on $K$) such that 
	\begin{equation}\label{eqKbounded}
		|\phi(K)|\leq C\|\Klain_\phi\|
	\end{equation}
	for all $\phi\in\Val_i^+$?
\end{QuestionA}

A different kind of dependence of $\phi$ on its values on $i$-dimensional convex bodies arises in the investigation of positive valuations,
i.e.\ valuations which satisfy $\phi(K)\geq 0$ for all $K\in\calK(\RR^n)$.
Here it is of interest to decide whether $\phi(L)\geq 0$ for $i$-dimensional convex bodies $L$ implies that $\phi(K)\geq 0$ for all convex bodies $K$.
For special classes of valuations such as constant coefficient valuations this is known to be true, see \cite{bernig_fu11}*{Corollary 2.10}.

\begin{QuestionB}[Monotonicity of the inverse Klain map]
	Let $K\in\calK(\RR^n)$ be a convex body. Is it true that 
	\begin{equation}\label{eqKpos}
		\Klain_\phi\geq0\ \Longrightarrow\ \phi(K)\geq 0
	\end{equation}
	for all $\phi\in\Val_i^+$?
\end{QuestionB}

As we will see, Question A and B are related in the sense that if $K\in\calK(\RR^n)$ satisfies \eqref{eqKpos} for every $\phi\in\Val_i^+$,
then also \eqref{eqKbounded} holds true for every $\phi\in\Val_i^+$. The answer to Question A is `yes' for generalized zonoids,
which are a dense subset of centrally symmetric convex bodies. In general however, the answer to Question~A or Question~B is negative.

\begin{theorem*} If $0<i<n-1$, then there exist centrally symmetric convex polytopes such that neither \eqref{eqKbounded} nor \eqref{eqKpos} hold for every $\phi\in\Val_i^+$.
\end{theorem*}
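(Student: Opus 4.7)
The plan is to construct, for every $n$ and $i$ with $0<i<n-1$, a centrally symmetric polytope $K\in\calK(\RR^n)$ together with a sequence $\phi_j\in\Val_i^+$ such that $\sup_j\|\Klain_{\phi_j}\|<\infty$ while $|\phi_j(K)|\to\infty$. This refutes \eqref{eqKbounded} at $K$; by the implication announced immediately before the theorem statement---that if $K$ satisfies \eqref{eqKpos} for every $\phi$ then it automatically satisfies \eqref{eqKbounded} for every $\phi$---the same polytope $K$ also witnesses failure of \eqref{eqKpos}, so a single family of examples handles both parts of the theorem.

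The $\phi_j$ I would take are mixed-volume valuations with generalized zonoids. For an even signed measure $\mu$ on $S^{n-1}$ write $Z_\mu$ for the associated generalized zonoid and set $\phi_\mu(K)=c_{n,i}\,V(K[i],Z_\mu[n-i])$; as $\mu$ varies this spans a dense subspace of $\Val_i^+$, and $\Klain_{\phi_\mu}(E)$ admits an explicit integral representation as a restricted cosine transform of $\mu$ on $\Grass_i$. For $K$ I would choose a centrally symmetric polytope that is manifestly not a generalized zonoid---the cross-polytope itself when $i=1$ is too low for the hypothesis, so the natural candidate is a product of a cross-polytope in an intermediate dimension with a ball and/or a cube to match $0<i<n-1$---so that the area measure $S_i(K,\cdot)$ concentrates on finitely many flags and cannot be absorbed by the generalized-zonoid class.

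The core step is spectral: decompose $\mu$ according to spherical harmonics on $S^{n-1}$ and exploit the known eigenvalues of the cosine transform on $\Grass_i$, which decay polynomially in the harmonic degree when $0<i<n-1$. The endpoints $i\in\{1,n-1\}$ are precisely where this transform inverts boundedly (consistent with the affirmative answer for generalized zonoids and with the Klain--Schneider correspondence), which explains the dimension restriction. Choosing $\mu_j$ supported on harmonics of degree tending to infinity forces $\|\Klain_{\phi_{\mu_j}}\|\to 0$, while the pairing of $\mu_j$ against the singular measure $S_i(K,\cdot)$ can be kept bounded below by concentrating the harmonics near the normal directions of $K$. Renormalizing by $\|\Klain_{\phi_{\mu_j}}\|$ produces the required sequence.

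The main obstacle, and the technical heart of the argument, is to make both estimates quantitative simultaneously: one needs a uniform lower bound on $|\phi_{\mu_j}(K)|$ in terms of the portion of $\mu_j$ that survives pairing with the singular measure $S_i(K,\cdot)$, matched against the precise polynomial decay rate of the $\Grass_i$-cosine transform on spherical harmonics. Obtaining these two bounds with matching exponents---and verifying that the chosen polytope really does pick up the fast-oscillating $\mu_j$---is where all the work sits; once the balance is set up, both failures follow at once.
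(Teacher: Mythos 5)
Your high-level strategy is sound: falsifying \eqref{eqKbounded} at a single polytope automatically falsifies \eqref{eqKpos} there, via the remark stated immediately before the theorem (if \eqref{eqKpos} held for all $\phi$, so would \eqref{eqKbounded}), so it suffices to exhibit a sequence $\phi_j\in\Val_i^+$ with $\sup_j\|\Klain_{\phi_j}\|<\infty$ and $|\phi_j(K)|\to\infty$. The route you then outline, however, diverges substantially from the paper's and leaves the essential steps undone.

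The paper never constructs such a sequence by hand. It proves two separate theorems: Theorem~\ref{thmGi2} shows, by Hahn--Banach and Riesz representation, that \eqref{eqKbounded} holds for all $\phi\in\Val_i^+$ exactly when $K\in\calG(i)$, i.e., exactly when the projection function $\vol_i(K|\cdot)$ lies in the image of the cosine transform on signed measures; and Theorem~\ref{proppoly}, extending Goodey--Weil, shows that a centrally symmetric polytope lies in $\calG(i)$ if and only if all of its $(i+1)$-faces are centrally symmetric. Any centrally symmetric polytope with a non-centrally-symmetric $(i+1)$-face (e.g.\ the cross-polytope whenever $0<i<n-1$, since its proper faces of dimension $\geq 2$ are simplices) then witnesses failure of \eqref{eqKbounded}, and hence of \eqref{eqKpos}. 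This converts the analytic question into a hard but finite combinatorial-geometric check on a single polytope, rather than a quantitative spectral estimate.

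Your direct spectral plan has concrete obstructions beyond being unquantified. First, the valuation $\phi_\mu(K)=c\,V(K[i],Z_\mu[n-i])$ is a degree-$(n-i)$ polynomial in $\mu$ (via $h_{Z_\mu}$), not linear, as soon as $i<n-1$, so a spherical-harmonic decomposition of $\mu$ does not diagonalize the map $\mu\mapsto\Klain_{\phi_\mu}$; to linearize you would need to move to $A_i(f)(K)=\int_{\Grass_i}\vol_i(K|E)f(E)\,dE$ with $\Klain_{A_i f}=C_i f$, which lives on $\Grass_i$, not $S^{n-1}$. Second, for $1<i<n-1$ the harmonic decomposition of $C^\infty(\Grass_i)$ is indexed by multi-partitions, not a single degree, so ``harmonics of degree tending to infinity'' and the cosine transform's decay rates are considerably more intricate than on the sphere. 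Third, the key heuristic --- that oscillatory harmonics can be simultaneously ``concentrated near the normal directions of $K$'' --- is in tension with the uncertainty principle: a single high mode cannot concentrate, and a wave packet that does concentrate mixes many modes with very different cosine-transform eigenvalues, so the two estimates you are trying to decouple are in fact coupled. Finally, you never verify that the proposed polytope actually fails to be a generalized-zonoid-type body in the relevant sense; that verification is exactly what Theorem~\ref{proppoly} supplies and what your sketch silently assumes. As written, the proposal is a plausible alternative program but not a proof; the paper's abstract duality argument avoids every one of these analytic difficulties.
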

 
More precisely, we can characterize those centrally symmetric convex bodies for which the answer to Question A or Question B is positive. We need some notation to state the results.

Let $\calG(i)$, $0<i<n$, denote the class of those centrally symmetric convex bodies $K$ with the property
that there exists a signed Borel measure $\mu_K$ on $\Grass_i$ such that
\begin{equation}\label{eqintrep}
	\vol_i(K|E)=\int_{\Grass_i} \cos(E,F) \; d\mu_K(F)
\end{equation}
for each $E\in\Grass_i$.
Here $ \vol_i(K|E)$ denotes the $i$-dimensional volume of the orthogonal projection of $K$ on $E$
and $\cos(E,F)$ denotes the cosine of the angle between $E$ and $F$.
It is a well-known fact that if $K\in \calK(\RR^n)$ is a generalized zonoid, then $K\in \calG(i)$,
see e.g.\ \cite{weil79}*{Theorem 2.2} or \cite{schneider93}.
In particular, $\calG(i)$ lies dense in the space of centrally symmetric convex bodies.

The following theorem shows that the answer to Question~A is positive precisely for those convex bodies $K$ which lie in $\calG(i)$.

\begin{theorem*}
	Suppose that $K\in\calK(\RR^n)$ is centrally symmetric and that $0<i<n$.
	Then $K\in\calG(i)$ if and only if there exists a constant $C\geq0$ such that 
		$$|\phi(K)|\leq C\|\Klain_\phi\|$$
	for any $\phi\in\Val_i^+$.
\end{theorem*}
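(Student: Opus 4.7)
The plan is to deduce both implications from the intermediate identity
\begin{equation*}
	\phi(K) = \int_{\Grass_i} \Klain_\phi(F) \, d\mu_K(F), \qquad \phi \in \Val_i^+,
\end{equation*}
which holds whenever $K \in \calG(i)$. Once this representation is secured, the forward implication is immediate with $C = |\mu_K|(\Grass_i)$, the total variation of $\mu_K$.

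For the forward direction I would first verify the identity on the subclass of \emph{Crofton-type} valuations
\begin{equation*}
	\phi_\sigma(L) = \int_{\Grass_i} \vol_i(L|G)\, d\sigma(G),
\end{equation*}
as $\sigma$ ranges over signed Borel measures on $\Grass_i$. Here the Klain function is the cosine transform $\Klain_{\phi_\sigma}(E) = \int \cos(E,G)\, d\sigma(G)$, and plugging the defining integral of $\calG(i)$ into $\vol_i(K|G)$ and swapping the order of integration via Fubini reduces the identity to the symmetry $\cos(E,G) = \cos(G,E)$. To pass to an arbitrary $\phi \in \Val_i^+$ I would argue by density: both sides of the identity are continuous functionals of $\phi$ (the left side with respect to the natural Banach topology on $\Val_i^+$, the right side with respect to the sup norm on $\Klain_\phi$, and the Klain map is continuous between these topologies), so it suffices that Crofton-type valuations are dense in $\Val_i^+$.

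For the converse, suppose the stated inequality holds. Injectivity of the Klain map makes the functional
\begin{equation*}
	\Lambda : \Klain(\Val_i^+) \to \CC, \qquad \Lambda(\Klain_\phi) = \phi(K),
\end{equation*}
well-defined, and the hypothesis says exactly that $\Lambda$ is continuous with respect to the sup norm inherited from $C(\Grass_i)$. Hahn--Banach extends $\Lambda$ to a bounded linear functional on $C(\Grass_i)$, and Riesz represents this extension as integration against a signed Borel measure $\mu$ on $\Grass_i$. Specializing to $\phi_G(\,\cdot\,) = \vol_i(\,\cdot\,|G)$, whose Klain function is $\cos(\,\cdot\,,G)$, yields $\vol_i(K|G) = \int \cos(E,G)\, d\mu(E)$ for every $G$, and relabeling via symmetry of the cosine gives precisely the defining condition for $K\in\calG(i)$ with $\mu_K = \mu$.

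The main obstacle I anticipate is the density of Crofton-type valuations in $\Val_i^+$, equivalently the density of the range of the cosine transform $M(\Grass_i) \to C(\Grass_i)$ inside $\Klain(\Val_i^+)$ in the sup norm. I expect to obtain this from Alesker's irreducibility theorem for $\Val_i^+$ as a $GL(n)$-module, together with the Alesker--Bernstein spectral description of the cosine transform on $O(n)$-isotypic components of $C(\Grass_i)$. A secondary technical point is the continuity of the Klain map from the Banach topology on $\Val_i^+$ to the sup norm on $C(\Grass_i)$, but this is straightforward from the definition of $\Klain_\phi(E)$ via evaluation of $\phi$ on a fixed unit ball section of $E$.
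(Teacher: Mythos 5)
Your proof follows the same overall strategy as the paper: verify the integral representation $\phi(K)=\int_{\Grass_i}\Klain_\phi\,d\mu_K$ on a dense $GL(n)$-invariant subspace and extend by continuity for the forward implication, and apply Hahn--Banach together with the Riesz representation theorem for the converse. The one genuine difference lies in how the representation is established on the dense class: the paper works with mixed-volume valuations $M\mapsto V(M[i],L_1,\ldots,L_{n-i})$ and invokes Schneider's Lemma~\ref{lemproj}, whose proof itself rests on the injectivity of the Klain map; you instead use Crofton-type valuations $\phi_\sigma(L)=\int_{\Grass_i}\vol_i(L|G)\,d\sigma(G)$ and reduce the identity to Fubini together with the symmetry $\cos(E,G)=\cos(G,E)$, which is somewhat more direct and self-contained. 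Both dense classes are $GL(n)$-invariant, so density follows from Alesker's irreducibility theorem alone; you do not actually need the Alesker--Bernstein description of the cosine transform here. One small inaccuracy worth correcting: density of the Crofton-type valuations in $\Val_i^+$ (in its Banach topology) is \emph{not} equivalent to density of the range of the cosine transform inside $\Klain(\Val_i^+)$ in the sup norm---the former implies the latter because $\Klain$ is continuous, but the converse implication would require continuity of $\Klain^{-1}$, which is precisely what this theorem is built to refute. Only the valuation-side density is what your continuity argument actually uses, and the irreducibility theorem delivers it directly.
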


Extending a result of Goodey and Weil \cite{goodey_weil91}, we show in Theorem~\ref{proppoly}
that a centrally symmetric convex polytope $P$ is an element of $\calG(i)$ if and only if $P$ has centrally symmetric $(i+1)$-faces.
In particular, not every centrally symmetric convex body is an element of $\calG(i)$ for $i\neq n-1$. 

Let $\calK(i)$, $0<i<n$, denote the class of those centrally symmetric convex bodies $K$ with the property
that \eqref{eqintrep} holds with a positive Borel measure $\mu_K$.
These classes have been introduced by Weil in \cite{weil79} and were subsequently studied by Goodey and Weil in \cite{goodey_weil91}.
The class $\calK(1)$ coincides with the class of centrally symmetric zonoids and $\calK(n-1)=\calG(n-1)$ consists of all centrally symmetric convex bodies.
The following theorem shows in particular that the answer to Question~B is negative in general.

\begin{theorem*}
	Suppose that $K\in\calK(\RR^n)$ is centrally symmetric and that $0<i<n$. Then $K\in\calK(i)$ if and only if 
		$$0\leq\Klain_\phi \ \Longrightarrow\ 0\leq \phi(K),$$
	for every $\phi\in\Val_i^+$.
\end{theorem*}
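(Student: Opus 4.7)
The plan is to prove the two implications separately, with the main work divided unevenly between them.

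For the sufficiency direction ($K\in\calK(i)\Rightarrow$ positivity), I would establish the integral representation
\[
\phi(K) = c_{n,i}\int_{\Grass_i}\Klain_\phi(F)\,d\mu_K(F),\qquad \phi\in\Val_i^+,
\]
for a universal normalization constant $c_{n,i}$; with $\mu_K\geq 0$, the conclusion $\Klain_\phi\geq 0\Rightarrow \phi(K)\geq 0$ is then immediate. The identity is verified directly on zonotopes by the polynomial expansion of an $i$-homogeneous, translation-invariant, continuous valuation: for $Z=[0,u_1]+\cdots+[0,u_m]$ one obtains $\phi(Z)=\sum_{|J|=i}|\det u_J|\,\Klain_\phi(\spn u_J)$, and the matching identity $\vol_i(Z|E)=\sum_{|J|=i}|\det u_J|\cos(E,\spn u_J)$ exhibits the atomic representing measure of $Z$. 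For general $K\in\calK(i)$ the representation is then obtained by approximation: weak approximation of $\mu_K$ by atomic positive measures, combined with an Aleksandrov-type convergence theorem identifying centrally symmetric bodies from their $i$-dimensional projection data, together with continuity of $\phi$ and of the Klain map.

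For the necessity direction (positivity $\Rightarrow K\in\calK(i)$), the plan is a Hahn--Banach style extension of a positive linear functional. Set
\[
\Lambda:\img(\Klain)\to\RR,\qquad \Lambda(\Klain_\phi):=\phi(K);
\]
injectivity of the Klain map makes $\Lambda$ well-defined, and the hypothesis makes it positive. The essential observation is that the Klain function of the $i$-th intrinsic volume is the constant~$1$, so $1\in\img(\Klain)$ and is moreover an order unit of $C(\Grass_i)$ (since $-\|f\|\cdot 1 \leq f\leq \|f\|\cdot 1$ for every $f\in C(\Grass_i)$). The standard extension theorem for positive linear functionals on ordered vector spaces whose domain contains an order unit then produces a positive extension $\tilde\Lambda$ to all of $C(\Grass_i)$, which by the Riesz representation theorem is integration against a positive Borel measure $\nu_K$ on $\Grass_i$. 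To identify $\nu_K$ as a representing measure for $K$, test against the projection valuations $\phi_E(L):=\vol_i(L|E)$: since the orthogonal projection of an $i$-dimensional body from a subspace $F$ onto $E$ scales $i$-dimensional volume by $\cos(E,F)$, one has $\Klain_{\phi_E}(F)=\cos(E,F)$, and thus
\[
\vol_i(K|E) = \phi_E(K) = \Lambda(\Klain_{\phi_E}) = \int_{\Grass_i}\cos(E,F)\,d\nu_K(F),
\]
which is exactly the defining property of $K\in\calK(i)$ with positive representing measure $\nu_K$.

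The main obstacle is the integral representation in the sufficiency direction. Its verification on zonotopes is a direct polynomial expansion, but the passage to arbitrary $K\in\calK(i)$ requires careful control: a naive Minkowski sum of orthogonal parallelepipeds realizing a prescribed atomic measure $\mu_n$ yields a zonotope whose own atomic measure also picks up spurious contributions from mixed $i$-subsets of the generating segments, so the approximation must control these cross-terms as the mesh vanishes. Combined with this is the need for a convergence theorem passing from convergence of the $i$-dimensional projection data $E\mapsto \vol_i(K|E)$ to Hausdorff convergence of the underlying centrally symmetric bodies themselves, which is delicate when $i<n-1$. The necessity direction, by contrast, is a clean application of the positive extension once the order-unit role of the intrinsic volume's constant Klain function is recognized.
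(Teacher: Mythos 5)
Your necessity direction is correct and essentially what the paper does: you view $f \mapsto \Klain^{-1}(f)(K)$ as a positive linear functional on $\img(\Klain)$, use that the constant $1 = \Klain_{V_i}$ lies in the image and is an order unit, apply a positive extension theorem plus Riesz representation to get a positive measure $\nu_K$ on $\Grass_i$, and then test against the projection valuations $\phi_E$ to read off $K\in\calK(i)$. The paper invokes Choquet's representation theorem for adapted subspaces of $C(X)$ rather than the M.~Riesz order-unit extension, but since $\Grass_i$ is compact the two are interchangeable and your argument is correct as written.

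The sufficiency direction has a genuine gap, and you name it yourself without resolving it. You propose to prove the integral representation $\phi(K) = \int_{\Grass_i}\Klain_\phi(F)\,d\mu_K(F)$ by verifying it on zonotopes and then approximating a general $K\in\calK(i)$. But approximating the positive measure $\mu_K$ weakly by atomic measures does not produce a sequence of convex bodies: a prescribed atomic positive measure on $\Grass_i$ is, for intermediate $i$, typically \emph{not} the representing measure of any convex body, and the zonotopes obtained by Minkowski-summing $i$-dimensional parallelepipeds introduce cross-terms that you have no mechanism to control. Moreover, even granted this, you would need an Aleksandrov-type convergence theorem from convergence of the $i$-th projection functions to Hausdorff convergence of the bodies, which you also flag as delicate and leave open. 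These are not technicalities that can be waved away; without them the representation is unproved.

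The paper sidesteps this entirely by approximating $\phi$ rather than $K$: it fixes $K\in\calG(i)$ and proves $\phi(K)=\int\Klain_\phi\,d\mu_K$ first for mixed-volume valuations $\phi(M)=V(M[i],L_1,\dots,L_{n-i})$, where the identity is Schneider's Lemma~\ref{lemproj} (itself a direct consequence of the injectivity of the Klain map --- precisely the mechanism behind your zonotope identity, but applied with general convex summands), and then extends to all of $\Val_i^+$ using Alesker's irreducibility theorem, which says such mixed-volume valuations span a dense subspace. Since $\Klain$ is continuous and $\mu_K$ is a finite measure, both sides of the representation are continuous in $\phi$, so density finishes the argument with no geometric approximation of bodies needed. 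That is the missing ingredient in your plan: you should approximate in the function space $\Val_i^+$ (where density is a theorem), not in $\calK(\RR^n)$ (where it is an obstacle).
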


The (dis-)continuity of the inverse Klain map underlines once more the importance of the notion of smooth valuation,
which was introduced by Alesker in \cite{alesker03}.
As a consequence of the characterization of the classes $\calG(i)$, we show in Corollary~\ref{corinvklain}
that the inverse of the Klain map $\Klain: \Val_i^+\rightarrow C(\Grass_i)$ is not continuous for $i\neq n-1$.
However, it was proved by Alesker and Bernstein that when restricted to $(\Val_i^+)^\infty$, the subspace of smooth valuations,
the Klain map  $\Klain: (\Val_i^+)^\infty\rightarrow C^\infty(\Grass_i)$ becomes an isomorphism of topological vector spaces onto its image
(see \cite{alesker_bernstein04}).
One consequence of the discontinuity of the inverse Klain map on continuous valuations is that Alesker's Fourier transform
$\FF:\Val^\infty\rightarrow \Val^\infty$ (see \cites{alesker03,alesker11}), which is an isomorphism of topological vector spaces,
cannot be continuously extended to a map $\Val\rightarrow \Val$.
As another consequence  of the characterization of the classes $\calG(i)$,
we obtain that not every even, translation-invariant, continuous valuations is angular.
More precisely, we show that there exist smooth valuations which are not angular.
This and related questions are discussed in greater detail in Section~\ref{secklain}.

One of the pillars of the theory of translation-invariant, continuous valuations is McMullen's decomposition theorem \cite{mcmullen77}.
It states that given a translation-invariant, continuous valuation $\phi$,
there exists for each $i=0,\ldots,n$ a unique translation-invariant, continuous valuation $\phi_i$ which is homogeneous of degree $i$ such that
\begin{equation}\label{eqmcmullen}
	\phi=\phi_0+\cdots +\phi_n.
\end{equation}
McMullen raised the question whether the same decomposition holds true in the class of translation-invariant, monotone valuations.
Here a valuation is called monotone if $K \subset L$ implies $\phi(K)\leq \phi(L)$.
Due to a theorem of McMullen \cite{mcmullen77}, every translation-invariant, monotone valuation is necessarily continuous.
Recently, Bernig and Fu \cite{bernig_fu11} have shown that
\emph{a translation-invariant, continuous valuation is monotone if and only if each of its homogeneous components is monotone},
thereby providing a positive answer to the question of McMullen.
However, the corresponding problem for positive valuations remained open.
Using that the inverse of the Klain map is not monotone (see Corollary \ref{corinvklain}),
we prove that an analog of McMullen's decomposition theorem does not hold in the class of positive, translation-invariant, continuous valuations.

\begin{theorem*}
	If $n\geq3$, then there exists a positive, even, translation-invariant, continuous valuation on $\calK(\RR^n)$
	such that not all of its homogeneous components are positive.
\end{theorem*}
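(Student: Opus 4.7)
The plan is to exploit Corollary~\ref{corinvklain}: since $n\geq3$, pick $i$ with $0<i<n-1$, and then by the previous characterization of $\calK(i)$ there exist a centrally symmetric convex body $K_0\in\calK(\RR^n)\setminus\calK(i)$ and a valuation $\psi\in\Val_i^+$ with $\Klain_\psi\geq0$ but $\psi(K_0)<0$. The goal is to realize $\psi$ as the $i$-homogeneous component of a positive valuation; the natural candidate is
$$\phi:=\psi+M(V_{i-1}+V_{i+1})$$
for a sufficiently large constant $M>0$, where $V_j$ denotes the $j$-th intrinsic volume. Since $V_{i-1}$ and $V_{i+1}$ have homogeneity degrees different from $i$, the $i$-homogeneous component of $\phi$ is exactly $\psi$, which is not positive. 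Hence once $\phi$ itself is shown to be positive the proof is complete.

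For the positivity of $\phi$, I split on $d=\dim K$. If $d<i$, then $\psi(K)=0$, because every $i$-homogeneous translation-invariant continuous valuation on an affine subspace of dimension less than $i$ vanishes; together with $V_{i+1}(K)=0$, this yields $\phi(K)=MV_{i-1}(K)\geq0$. If $d=i$, then $\psi(K)=\Klain_\psi(\aff K)\vol_i(K)\geq0$ by the hypothesis $\Klain_\psi\geq0$, and again $V_{i+1}(K)=0$, so $\phi(K)\geq0$. If $d\geq i+1$, both $V_{i-1}(K),V_{i+1}(K)>0$, and positivity of $\phi$ reduces to
$$M\bigl(V_{i-1}(K)+V_{i+1}(K)\bigr)\geq -\psi(K)$$
whenever $\psi(K)<0$. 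Choosing $M$ above
$$R:=\sup\Bigl\{|\psi(K)|\big/\bigl(V_{i-1}(K)+V_{i+1}(K)\bigr)\;:\;\dim K\geq i+1,\ \psi(K)<0\Bigr\}$$
would then complete the argument.

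The main obstacle is to establish $R<\infty$. Maximizing over scalings $K\mapsto tK$ of a fixed shape shows that $R<\infty$ is equivalent to the boundedness of the scale-invariant quantity $|\psi(K)|/\sqrt{V_{i-1}(K)V_{i+1}(K)}$ over the same set. I expect to establish this by a compactness argument: along any sequence $K_m$ on which the quotient would diverge, an appropriate rescaling (to normalize, say, the circumradius) together with Blaschke's selection theorem produces a Hausdorff limit $K_\infty$ whose dimension is forced to be at most $i$; the hypothesis $\Klain_\psi\geq0$ then gives $\psi(K_\infty)\geq0$, so by continuity $\psi(K_m)\to0$. Performing a first-order expansion around $K_\infty$ shows that $|\psi(K_m)|$ and $V_{i+1}(K_m)$ both vanish linearly in the transverse thickness of $K_m$, while $V_{i-1}(K_m)$ stays bounded away from zero; the ratio therefore tends to zero like the square root of the thickness, contradicting the assumed divergence. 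Once $R<\infty$ is established, choosing $M>R$ produces a positive valuation $\phi$ whose $i$-homogeneous component is $\psi$, hence not positive, as required.
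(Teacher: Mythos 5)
Your high-level strategy — take $\psi\in\Val_i^+$ with $\Klain_\psi\geq0$ but $\psi(K_0)<0$ via Corollary~\ref{corinvklain}, and add a large multiple of $V_{i-1}+V_{i+1}$ to obtain a positive valuation whose degree-$i$ component is $\psi$ — is essentially the paper's: for $i=1$ one has $V_0\equiv1$, so $c_0\cdot 1+c_1 V_2$ in Lemma~\ref{lempos} is exactly your $M(V_{i-1}+V_{i+1})$. The difference is in how you justify the inequality $|\psi(K)|\leq M\bigl(V_{i-1}(K)+V_{i+1}(K)\bigr)$, i.e.\ that your supremum $R$ is finite, and here there is a genuine gap.

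Two things are missing. First, you should replace $\psi$ by $\psi+tV_i$ for small $t>0$, so that $\Klain_\psi>0$ \emph{strictly} while $\psi(K_0)<0$ persists; without this, your compactness argument fails precisely at those limit bodies $K_\infty$ with $\Klain_\psi(\aff K_\infty)=0$. Second, and more seriously, the concluding ``first-order expansion'' step is not justified. You claim that as $K_m$ degenerates to an $i$-dimensional $K_\infty$, $|\psi(K_m)|$ and $V_{i+1}(K_m)$ both vanish linearly in the transverse thickness while $V_{i-1}(K_m)$ stays bounded below, so that $|\psi|/\sqrt{V_{i-1}V_{i+1}}\to0$. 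But a general $\psi\in\Val_i^+$ is only continuous — not Lipschitz or H\"older with respect to the Hausdorff metric — so there is no a priori control on how fast $\psi(K_m)$ approaches its limit, and the ratio $|\psi(K_m)|/\sqrt{V_{i-1}(K_m)V_{i+1}(K_m)}$ could well diverge. For $i>1$ there is also the additional degenerate case $\dim K_\infty<i$, where both intrinsic volumes in the denominator tend to zero and the comparison is even more delicate; the paper's choice $i=1$ avoids this since $V_0\equiv1$.

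The paper's Lemma~\ref{lempos} avoids any rate comparison. After arranging $\Klain_\phi>0$, the set $\calN$ of normalized line segments is compact with $\varepsilon:=\min_\calN\phi>0$; uniform continuity of $\phi$ on the compact family $\calM$ gives $\eta>0$ such that $d_H(K,L)<6\eta$ forces $|\phi(K)-\phi(L)|<\varepsilon$. Perelman's inequality $\rmR_{n-1}(K)\leq 3\,\rmr_2(K)$ on successive radii then shows that any normalized $K'$ with $\rmr_2(K')<\eta$ lies in a thin cylinder, hence is $6\eta$-close to a segment in $\calN$, so $\phi(K')>0$ \emph{outright} — no decay rates involved. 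For the remaining ``fat'' bodies one uses the elementary bounds $V_2(K)\geq\pi\eta^2\rmR(K)^2$ and $|\phi(K)|\leq\|\phi\|\rmR(K)$. To repair your proposal you would need either to prove the quantitative decay estimate on $|\psi(K_m)|$ for arbitrary continuous valuations (which is not available), or replace that step with a uniform-continuity argument along the lines of the paper's.
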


First results on convex-body-valued valuations were obtained by Schneider \cite{schneider74} in the 1970s.
In recent years, the seminal work of Ludwig \cites{ludwig02,ludwig03,ludwig05,ludwig06}
has triggered more extensive investigations of such valuations
\cites{abardia12,abardia_bernig11,haberl_ludwig06,haberl08,haberl09,haberl11,schneider_schuster06,schuster07,schuster09,schuster_wannerer12,wannerer11,
haberl_parapatits,parapatits_1},
in particular in connection with the theory of affine isoperimetric inequalities
\cites{LutwakYangZhang2000_1, LutwakYangZhang2000_2, LutwakYangZhang2002, LutwakYangZhang2010}.
The existence of a decomposition \eqref{eqmcmullen} for convex-body-valued valuations would be very beneficial for this area,
since it would then be sufficient to study convex-body-valued valuations which are homogeneous of a certain degree.
A first step in this direction was taken by Schuster and the first named author \cite{parapatits_schuster},
who generalized the well known Steiner formula to translation-invariant, continuous Minkowski valuations $\Phi:\calK(\RR^n)\rightarrow \calK(\RR^n)$,
i.e.\ convex-body-valued valuations where the addition on the target space is the usual vector addition of sets.
The existence of a McMullen decomposition for Minkowski valuations would immediately imply this Steiner formula for Minkowski valuations.
However, from the result on positive valuations we immediately obtain that a McMullen decomposition for Minkowski valuations does not exist.
This solves a problem first posed by Schneider and Schuster \cite{schneider_schuster06} (see also \cites{parapatits_schuster, schuster09}).

\begin{corollary*}
	If $n\geq3$, then there exists an even, translation-invariant, continuous Minkowski valuation $\Phi:\calK(\RR^n)\rightarrow\calK(\RR^n)$
	which cannot be decomposed into a sum of homogeneous Minkowski valuations.
\end{corollary*}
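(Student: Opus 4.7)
The plan is to lift the scalar counterexample furnished by the preceding theorem to a Minkowski-valued one by multiplying the Euclidean unit ball by that scalar valuation. Concretely, by the preceding theorem we may fix a positive, even, translation-invariant, continuous valuation $\phi$ on $\calK(\RR^n)$ whose McMullen decomposition $\phi=\phi_0+\cdots+\phi_n$ contains a component $\phi_j$ which is not positive; pick a convex body $K_0$ with $\phi_j(K_0)<0$. Let $B$ denote the Euclidean unit ball and set $\Phi(K):=\phi(K)\,B$. Since $\phi\geq 0$, $\Phi(K)$ is a well-defined convex body, and because Minkowski addition of origin-centered balls is additive in the radii, one verifies immediately that $\Phi(K\cup L)+\Phi(K\cap L)=\Phi(K)+\Phi(L)$ whenever $K\cup L$ is convex, and that $\Phi$ is translation-invariant, continuous, and even.

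Suppose, towards a contradiction, that $\Phi=\Phi_0+\cdots+\Phi_n$ as a Minkowski sum, with each $\Phi_i$ a continuous, homogeneous Minkowski valuation of degree $i$. Passing to support functions one obtains
\[
\sum_{i=0}^n \lambda^i h_{\Phi_i(K+x)}(u) = h_{\Phi(\lambda(K+x))}(u) = h_{\Phi(\lambda K)}(u) = \sum_{i=0}^n \lambda^i h_{\Phi_i(K)}(u)
\]
for every unit vector $u$, every $\lambda\geq 0$, and every $x\in\RR^n$, where translation-invariance of $\Phi$ is used in the middle equality. Comparing polynomial coefficients in $\lambda$ shows that each $\Phi_i$ is translation-invariant. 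Hence, for each unit $u$, $\psi_i^u\colon K\mapsto h_{\Phi_i(K)}(u)$ is a translation-invariant, continuous, $i$-homogeneous scalar valuation, and $\phi=\sum_i \psi_i^u$. By the uniqueness of McMullen's scalar decomposition, $\psi_i^u=\phi_i$ for every $i$ and every unit $u$.

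Consequently $u\mapsto h_{\Phi_i(K)}(u)$ is constantly equal to $\phi_i(K)$ on the unit sphere, so by $1$-homogeneity $h_{\Phi_i(K)}(v)=\phi_i(K)\,|v|$ for every $v\in\RR^n$. The elementary inequality $h_C(v)+h_C(-v)\geq 0$, valid for any convex body $C$, applied to $C=\Phi_i(K)$ yields $2\phi_i(K)|v|\geq 0$ for all $v$, forcing $\phi_i(K)\geq 0$ for every $K$ and every $i$. Taking $i=j$ and $K=K_0$ contradicts $\phi_j(K_0)<0$, so no such decomposition can exist. The main technical point is the passage from a Minkowski decomposition of $\Phi$ to a scalar decomposition of $\phi$ into translation-invariant, continuous, homogeneous components, handled by the polynomial-coefficient identification above; once that step is in hand, the uniqueness of McMullen's scalar decomposition delivers the conclusion.
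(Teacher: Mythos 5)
Your proposal is correct and is essentially the same argument as the paper's: lift the positive scalar valuation $\phi$ with a non-positive homogeneous component to $\Phi(K)=\phi(K)B^n$, pass to support functions in a Minkowski decomposition of $\Phi$, identify each $h_{\Phi_i(K)}(\cdot)$ with $\phi_i(K)\lvert\cdot\rvert$ via the uniqueness of McMullen's scalar decomposition, and observe that $\phi_{i_0}(K)\lvert\cdot\rvert$ cannot be a support function when $\phi_{i_0}(K)<0$. Your extra step verifying that the $\Phi_i$ are translation-invariant via the $\lambda$-polynomial identity is a small self-contained addition that the paper takes for granted, but the strategy and the key contradiction are identical.
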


	\section{Definitions and Background}\label{secdef}
		\subsection{Valuation theory}

We denote by $\Grass_i=\Grass_i(\RR^n)$ the Grassmann manifold of $i$-dimensional, linear subspaces of $\RR^n$
and by $\overline\Grass_i$ the Grassmannian of the corresponding affine subspaces.
We write $\calK(\RR^n)$ for the space of convex bodies of $\RR^n$, i.e. non-empty, convex, compact subsets of $\RR^n$,
and $\calP(\RR^n)$ for the set of polytopes in $\RR^n$.
We use $x\cdot y$ to denote the standard Euclidean inner product of $x,y\in\RR^n$ and $|x|=\sqrt{x\cdot x}$ for the Euclidean norm of $x\in\RR^n$.
The topology on $\calK(\RR^n)$ is induced by the Hausdorff metric on $\calK(\RR^n)$,
	$$d_H(K,L)=\inf\{r\geq 0: K\subset L+rB^n\ \text{and}\ L\subset K+rB^n\},\qquad K,L\in\calK(\RR^n),$$
where $B^n=\{x\in\RR^n: |x|\leq 1\}$ denotes the Euclidean unit ball of $\RR^n$.
We put 
	$$\omega_n=\vol_n(B^n)=\frac{\pi^\frac{n}{2}}{\Gamma(\frac{n}{2}+1)}$$
for the volume of the $n$-dimensional Euclidean unit ball of $\RR^n$ and set $S^{n-1}=\{x\in\RR^n:|x|=1\}$ for the Euclidean unit sphere.
We denote by
	$$h_K(x)=h(K,x)=\max\{x\cdot y: y\in K\},\qquad x\in\RR^n,$$
the support function of a convex body $K\in\calK(\RR^n)$.

\begin{definition}
	Let $(A,+)$ be an abelian semigroup.
	A map $\phi:\calK(\RR^n)\rightarrow A$ is called an $A$-valued \emph{valuation} if
	\begin{equation}\label{eq_defval}
		\phi(K\cup L)+\phi(K\cap L)=\phi(K)+\phi(L),
	\end{equation}
	whenever $K$, $L$, and $K\cup L\in\calK(\RR^n)$.
\end{definition}
In the case $(A,+)=(\CC,+)$ we speak of (scalar-valued) valuations.
In the case $(A,+)=(\calK(\RR^n),+)$, where `$+$' denotes the usual Minkowski sum of sets, i.e. \
	$$K+L=\{x+y: x\in K\ \text{and}\ y\in L\},$$
we speak of Minkowski valuations.
It follows from the basic properties of support functions (see e.g.\ Schneider \cite{schneider93})
that a map $\Phi: \calK(\RR^n)\rightarrow \calK(\RR^n)$ is a Minkowski valuation if and only if for every $x\in\RR^n$ the function
	$$K\mapsto h(\Phi K,x)$$
is a valuation.

A valuation $\phi$ is called continuous if it is continuous with respect to the topology induced by the Hausdorff metric.
We call $\phi$ translation-invariant if $\phi(K+x)=\phi(K)$ for every $x\in\RR^n$ and $K\in\calK(\RR^n)$
and we call $\phi$ homogeneous of degree $i$ if $\phi(tK)=t^i\phi(K)$ for any $t\geq 0$ and $K\in\calK(\RR^n)$.
The space of translation-invariant, continuous valuations is denoted by $\Val$ and the subspace of $i$-homogeneous valuations is denoted by $\Val_i$.
The following result is known as McMullen's decomposition theorem.

\begin{theorem}[McMullen \cite{mcmullen77}]\label{thmmcmullen}
	$$\Val=\bigoplus_{i=0}^n \Val_i.$$
\end{theorem}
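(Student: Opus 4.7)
The plan is to reduce the decomposition to the polynomiality assertion that for every $\phi\in\Val$ and every $K\in\calK(\RR^n)$, the function $t\mapsto\phi(tK)$ on $[0,\infty)$ is a polynomial of degree at most $n$. Once this is available, the components $\phi_i$ can be read off as its coefficients, and the remaining properties (translation-invariance, continuity, $i$-homogeneity, and the valuation identity) follow from standard manipulations.

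\emph{Step 1: polynomiality of $t\mapsto\phi(tK)$.} I would first establish this for polytopes by induction on the dimension $n$. For $n=1$, a convex body is a segment, and translation-invariance together with the valuation identity applied to $[0,s+t]=[0,s]\cup[s,s+t]$ forces $f(t):=\phi([0,t])$ to satisfy $f(s+t)+c=f(s)+f(t)$ with $c=\phi(\{0\})$; continuity then makes $f$ affine. For the inductive step, I would exploit the fact that a dilation $tP$ of a polytope $P$ can be described combinatorially as $P$ together with prisms built over its facets, so that the valuation identity, applied to a suitable decomposition of $tP$, reduces the problem to lower-dimensional valuations restricted to the facets of $P$. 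The technical heart is to organise these decompositions coherently, and this is the main obstacle. Continuity and density of polytopes in $\calK(\RR^n)$ then extends polynomiality to all convex bodies.

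\emph{Step 2: extraction of the components.} Having a polynomial expression $\phi(tK)=\sum_{i=0}^n t^i\phi_i(K)$, I choose $n+1$ distinct values $0\leq t_0<\dots<t_n$ and invert the Vandermonde system to obtain
\begin{equation*}
\phi_i(K)=\sum_{j=0}^n c_{ij}\,\phi(t_j K),
\end{equation*}
where the $c_{ij}$ are the entries of the inverse Vandermonde matrix. This formula exhibits $\phi_i$ as a linear combination of continuous translation-invariant functionals, so each $\phi_i$ is automatically continuous and translation-invariant.

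\emph{Step 3: valuation property and homogeneity.} If $K,L,K\cup L\in\calK(\RR^n)$, then $tK,tL,t(K\cup L)\in\calK(\RR^n)$ as well, and $tK\cup tL=t(K\cup L)$, $tK\cap tL=t(K\cap L)$. The valuation identity for $\phi$ applied at $tK,tL$ yields a polynomial identity in $t$ whose coefficient comparison gives the valuation property for each $\phi_i$. Homogeneity follows from $\phi(stK)=\sum_i(st)^i\phi_i(K)=\sum_i t^i\bigl(s^i\phi_i(K)\bigr)$, together with uniqueness of polynomial coefficients.

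\emph{Step 4: directness of the sum.} If $\sum_{i=0}^n\psi_i=0$ with $\psi_i\in\Val_i$, then for any fixed $K$ the identity $\sum_i t^i\psi_i(K)=0$ in $t\geq 0$ forces $\psi_i(K)=0$ for each $i$. This gives $\Val_i\cap\sum_{j\neq i}\Val_j=\{0\}$ and hence the direct sum decomposition.

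The principal difficulty throughout is Step 1: the polynomiality property is where the full strength of translation-invariance, continuity, and the valuation identity is used, and setting up the inductive decomposition of $tP$ requires care. Steps 2--4 are then essentially formal manipulations with polynomial identities.
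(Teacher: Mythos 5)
The paper cites this result from McMullen \cite{mcmullen77} and gives no proof of its own, so there is no internal argument to compare against. Your overall strategy is the correct and standard one: establish that $t\mapsto\phi(tK)$ is a polynomial in $t\geq 0$ of degree at most $n$, then extract and verify the homogeneous components via Vandermonde inversion. Steps 2--4 are routine polynomial-identity manipulations and you have carried them out correctly: the Vandermonde extraction gives continuity and translation-invariance of each $\phi_i$, comparing coefficients in $t$ gives the valuation property and $i$-homogeneity, and uniqueness of polynomial coefficients gives that the sum $\sum_i\Val_i$ is direct.

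The genuine gap is entirely in Step 1, and while you flag it as the technical heart, the sketch you offer for the inductive step does not stand up. Writing $tP$ (for $t>1$ and, say, $0\in\inter P$) as ``$P$ together with prisms built over its facets'' is not a valid decomposition: the prisms over adjacent facets overlap in regions of full dimension near the ridges, and after inclusion-exclusion there remain corner contributions indexed by faces of \emph{every} dimension from $n-2$ down to $0$, not just facets. Organising this inclusion-exclusion coherently is precisely where the substance of McMullen's argument lies. Moreover, the induction as phrased is on the ambient dimension $n$ and is supposed to reduce to ``valuations restricted to the facets,'' but the facets of $tP$ both dilate and translate as $t$ varies, so the restriction of $\phi$ is not a single translation-invariant valuation on a fixed $(n-1)$-dimensional space; closing the induction requires a genuinely different bookkeeping (e.g.\ first proving, for translation-invariant dilatation-continuous valuations on polytopes, the face decomposition $\phi(P)=\sum_i\sum_{F\in\calF_i(P)}\lambda_{n-i}(N(F,P))\vol_i(F)$ as in Theorem~\ref{thmvalonpoly}, from which polynomiality of $\phi(tP)$ is immediate by homogeneity of $\vol_i$). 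Until the polytope polynomiality is actually supplied, the proof is not complete; the density argument and Steps 2--4 cannot compensate for it.
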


The space $\Val_i$ can be further decomposed into even and odd valuations, $\Val_i=\Val_i^+\oplus\Val_i^-$,
where a valuation $\phi$ is called even (resp.\ odd) if $\phi(-K)=\phi(K)$ (resp.\ $\phi(-K)=-\phi(K)$) for every $K\in\calK(\RR^n)$.
It follows from McMullen's decomposition theorem that
	$$\|\phi\|=\sup\{|\phi(K)|: K\subset B^n\}$$
defines a Banach norm on $\Val$.
If we replace $B^n$ by a convex body $B$ with non-empty interior, we obtain an equivalent norm.

The canonical action of $GL(n)$, the group  of invertible, linear transformations of $\RR^n$, on $\Val$ is given by
	$$(g\cdot \phi)(K)=\phi(g^{-1} K).$$
A valuation $\phi$ is called smooth if $g\mapsto g\cdot \phi$ is a smooth map from $GL(n)$ to the Banach space $\Val$.
The subspace of smooth valuations is denoted by $\Val^\infty$
and carries a natural Fr\'{e}chet space topology via the identification of $\Val^\infty$ with the closed subspace of $C^\infty(GL(n), \Val)$
consisting of the smooth maps $g\mapsto g\cdot \phi$, $\phi\in\Val^\infty$.
Smooth valuations form a dense subspace of $\Val$.
For more information on smooth valuations see e.g.\ \cite{alesker03} or the survey article \cite{alesker07}.

We denote by $V(K_1,\ldots,K_n)$ the mixed volume of $K_1,\ldots,K_n\in\calK(\RR^n)$, which is normalized such that $V(K,\ldots,K)=\vol_n(K)$.
Given a partition $n=i_1+\cdots + i_m$, $i_j\geq1$, and convex bodies $K_1,\ldots,K_m\in\calK(\RR^n)$ we put
	$$V(K_1[i_1],\ldots, K_m[i_m])=V(\underbrace{K_1,\ldots,K_1}_{i_1\ \text{times}},\ldots,\underbrace{K_m,\ldots,K_m}_{i_m\ \text{times}}).$$
If $i_j=1$, we omit $[i_j]$. Fix $A_1,\ldots, A_{n-i}\in\calK(\RR^n)$.
Then
	$$K \mapsto V(K[i], A_1,\ldots,A_{n-i})$$
defines an element of $\Val_i$.
In particular, the $i$-th intrinsic volume is given by
	$$V_i(K)=\frac{\binom{n}{i}}{\omega_{n-i}}V(K[i],B^n[n-i]),$$
where the normalization is chosen such that for $i$-dimensional convex bodies $K$
the intrinsic volume $V_i(K)$ equals the ordinary $i$-dimensional volume of $K$.
The $i$-dimensional volume of the orthogonal projection of a convex body $K$ onto $E\in\Grass_i$ can be expressed as a mixed volume,
\begin{equation}\label{eqprojmixed}
	\vol_i(K|E)=\binom{n}{i} V(K[i],L_E[n-i]),
\end{equation}
where $L_E$ is any convex body contained in $E^\perp$, the orthogonal complement of $E$, with $\vol_{n-i}(L_E)=1$, see \cite{schneider93}*{p. 294}.

Given convex bodies $K_1,\ldots,K_{n-1}\in\calK(\RR^n)$
there exists a positive Borel measure $S(K_1,\ldots,K_{n-1},\;\cdot\;)$ on the Euclidean unit sphere
called the \emph{mixed area measure} of $K_1,\ldots,K_{n-1}$ which is uniquely determined by the property
	$$V(L,K_1,\ldots,K_{n-1})=\frac{1}{n} \int_{S^{n-1}} h(L,u)\; dS(K_1,\ldots,K_{n-1},u)$$
for every $L\in\calK(\RR^n)$.
For $K\in\calK(\RR^n)$ and $0\leq i<n$ the mixed area measure
	$$S_i(K,\;\cdot\;):=S(K[i], B^n[n-i-1],\;\cdot\;)$$
is called the $i$-th (Euclidean) area measure of $K$.
For a polytope $P\in \calK(\RR^n)$ the $i$-th area measure is given by the formula
\begin{equation}\label{eqsurfpoly}
	S_i(P,\omega)=\binom{n}{i}^{-1}\frac{n}{n-i} \sum_{F\in\calF_i(P)} \calH^{n-1-i}(N(F,P)\cap \omega)\vol_i(F).
\end{equation}
Here $N(F,P)$ denotes the normal cone of $P$ at $F$, $\calH^k$ the $k$-dimensional Hausdorff measure on $\RR^n$,
and $\calF_i(P)$ the set of $i$-faces of $P$.

McMullen conjectured that every continuous, translation-invariant valuation can be approximated by linear combinations of valuations of the form $K\mapsto \vol_n(K+A)$, $A\in\calK(\RR^n)$. The following theorem was proved by Alesker \cite{alesker01} and it provides---in a much stronger form---a positive solution to McMullen's conjecture. 

\begin{theorem}[Alesker \cite{alesker01}]\label{thmirred}
	For each $i$ the spaces $\Val_i^+$ and $\Val_i^-$ are irreducible $GL(n)$-representations,
	i.e. they do not have proper, invariant, closed subspaces. 
\end{theorem}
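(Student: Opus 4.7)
The result is Alesker's celebrated Irreducibility Theorem, and my plan follows his strategy: the even case via the Klain embedding combined with representation theory of $GL(n)$, and the odd case via an analogous embedding. I focus on the even case. Let $W\subset\Val_i^+$ be a closed, nonzero, $GL(n)$-invariant subspace; the goal is $W=\Val_i^+$. Since smooth vectors are dense in any Banach $GL(n)$-representation, $W^\infty:=W\cap\Val^\infty$ is dense in $W$ and still $GL(n)$-invariant, so it suffices to prove $W^\infty=(\Val_i^+)^\infty$. By the Alesker-Bernstein theorem mentioned in the introduction, the Klain map $\Klain$ identifies $(\Val_i^+)^\infty$ as a topological $GL(n)$-module with a closed submodule $\calR\subset C^\infty(\Grass_i)$; by injectivity of $\Klain$ the problem reduces to showing that $\calR$ has no proper, closed, $GL(n)$-invariant subspace.

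Next I would decompose $\calR$ under the maximal compact subgroup $O(n)$. The Grassmannian $\Grass_i$ is the symmetric space $O(n)/(O(i)\times O(n-i))$, and by Peter-Weyl $C^\infty(\Grass_i)=\widehat{\bigoplus}_\lambda V_\lambda$ splits as a Hilbert sum of finite-dimensional $O(n)$-representations indexed by those $\lambda$ spherical for this pair; these are parametrized by Young diagrams with at most $\min(i,n-i)$ rows. One verifies that $\calR$ is precisely the sum of the $V_\lambda$ whose rows all have even length, reflecting evenness of valuations. The crucial representation-theoretic step is to analyze the action of the Lie algebra $\frakgg$ of $GL(n)$: writing $\frakgg=\frakoo(n)\oplus\frakmm$ for the Cartan decomposition (so $\frakmm$ is symmetric matrices), elements of $\frakmm$ act on $\calR$ as raising and lowering operators connecting any spherical $O(n)$-type to its immediate neighbours in the lattice of allowable $\lambda$'s. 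Thus any closed $\frakgg$-invariant subspace of $\calR$ containing one $V_\lambda$ contains them all.

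Assuming this, the theorem follows quickly. Pick $0\neq\phi\in W^\infty$; by Klain's injectivity theorem, $\Klain_\phi\in\calR$ is nonzero and has nonzero projection to some $V_{\lambda_0}\subset\calR$. Averaging $\Klain_\phi$ against matrix coefficients of $V_{\lambda_0}$, i.e.\ integrating $g\cdot\phi$ over $O(n)$ against a suitable $K$-finite function, yields a nonzero element of $V_{\lambda_0}\cap\calR$ in the closure of $\Klain(W^\infty)$. By the previous step all $V_\lambda\subset\calR$ lie in this closure, forcing $\Klain(W^\infty)=\calR$ and hence $W=\Val_i^+$. The odd case is parallel, replacing $C^\infty(\Grass_i)$ with smooth sections of the orientation line bundle over $\Grass_i$, into which $(\Val_i^-)^\infty$ embeds analogously.

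The main obstacle is the infinitesimal analysis in the middle paragraph: explicitly showing that $\frakmm$ connects all spherical $O(n)$-types of even row length. This requires constructing highest-weight vectors in $\calR$, computing the action of $\frakmm$ on them, and invoking Casselman-Wallach theory to transfer between the algebraic $K$-finite picture and the Fr\'echet topology on $\Val^\infty$. This is the technical core of Alesker's argument and cannot be shortened substantially.
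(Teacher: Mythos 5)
The paper does \emph{not} prove this theorem; it is Alesker's Irreducibility Theorem, cited from Alesker's 2001 paper in \emph{Geom.\ Funct.\ Anal.}\ and used as a black box. So there is no in-paper argument to compare against, and your proposal must be evaluated against Alesker's actual proof.

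There is a genuine and serious gap: your argument is circular. You propose to use the Alesker--Bernstein theorem (the characterization of the image of the Klain map on $(\Val_i^+)^\infty$ as a closed $GL(n)$-submodule of $C^\infty(\Grass_i)$) as the key input. But the Alesker--Bernstein paper \emph{derives} its range characterization of the cosine transform \emph{from} the Irreducibility Theorem --- indeed this paper itself remarks that ``the image of the cosine transform was determined in \cite{alesker_bernstein04} using results from valuation theory.'' You cannot invoke it to prove the result it depends on. Chronologically and logically, the Klain-map picture you appeal to was built on top of irreducibility, not the other way around.

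Beyond the circularity, the $O(n)$-harmonic analysis you sketch is factually off. The image of the Klain map (equivalently, of the cosine transform) inside $C^\infty(\Grass_i)$ is not ``all spherical types $V_\lambda$ with even row lengths''; that would just be all even functions. The actual image is the much smaller subspace where, in addition to all $\lambda_j$ being even, one has $|\lambda_2|\leq 2$. So the ladder of $K$-types you would need to connect via $\frakmm$ is not the one you described, and the ``connectedness'' claim would have to be reproved for the correct, more rigid, subspace. Alesker's genuine proof proceeds differently: he embeds $\Val_i^{\pm}$ into a degenerate principal series of $GL(n,\RR)$ induced from a character of the maximal parabolic with Levi $GL(i)\times GL(n-i)$, and then invokes the known composition series of these degenerate principal series (due to Howe--Lee and related work), together with Casselman--Wallach theory to pass between Harish--Chandra modules and the Banach/Fr\'echet picture. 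Your instinct that the hard core is Lie-algebraic and uses Casselman--Wallach is right in spirit, but the embedding and the representation-theoretic input need to be the parabolic-induction ones, not the Klain/Alesker--Bernstein ones.
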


A valuation $\phi\in\Val$ is called simple if it vanishes on convex bodies with empty interior.
Klain \cite{klain95} proved that if $\phi$ is a translation-invariant, continuous, even, and simple valuation,
then $\phi$ must be a multiple of the $n$-dimensional volume, $\phi=c \vol_n$ for some constant $c\in\CC$.
Fix $0<i<n$.
Given $\phi\in \Val_i^+$ and $E\in \Grass_i$,  we denote by $\phi_E$ the restriction of $\phi$ to the subspace $E$.
As a valuation on $E$, $\phi_E$ is translation-invariant, continuous, even, and by Theorem \ref{thmmcmullen} simple.
Thus, $\phi_E$ is proportional to the $i$-dimensional volume on $E$ and we denote this proportionality factor by $\Klain_\phi(E)\in\CC$,
	$$\phi_E=\Klain_\phi(E) \vol_i, \qquad E\in\Grass_i.$$

Let $C(\Grass_i)$ denote the space of continuous functions on $\Grass_i$.
The map $\Klain: \Val_i^+\rightarrow C(\Grass_i)$, $\phi\mapsto \Klain_\phi$, is called Klain map
and it is easy to see that it is a continuous, linear operator.

\begin{theorem}[Klain \cite{klain00}]\label{thmklain}
	The Klain map $\Klain: \Val_i^+\rightarrow C(\Grass_i)$ is injective.
\end{theorem}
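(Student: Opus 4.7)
The plan is to reduce the statement to the classification of simple valuations (recorded in the paragraph preceding the theorem): every translation-invariant, continuous, even, simple valuation on $\calK(\RR^n)$ is a scalar multiple of $\vol_n$. By linearity of $\Klain$, it suffices to show that if $\phi\in\Val_i^+$ satisfies $\Klain_\phi\equiv 0$, then $\phi=0$. I would argue by induction on $n$, the key intermediate goal being to show that such a $\phi$ must be simple.

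For the inductive step, fix a linear hyperplane $H\in\Grass_{n-1}(\RR^n)$ and consider the restriction $\phi|_H$. As a valuation on $H\cong\RR^{n-1}$, it is even, $i$-homogeneous, translation-invariant, and continuous; a direct unwinding of the definition of the Klain function shows that $\Klain_{\phi|_H}$ coincides with the restriction of $\Klain_\phi$ to the subset $\Grass_i(H)\subset\Grass_i(\RR^n)$. Under the hypothesis $\Klain_\phi\equiv 0$ this restriction vanishes, and the inductive hypothesis forces $\phi|_H=0$. Since this holds for every linear hyperplane through the origin, translation invariance upgrades the conclusion to: $\phi$ vanishes on every convex body contained in an affine hyperplane, i.e.\ on every convex body with empty interior. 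Hence $\phi$ is simple.

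Klain's theorem on simple valuations now yields $\phi=c\,\vol_n$ for some $c\in\CC$. Since $\phi$ is $i$-homogeneous with $i<n$, comparing degrees of homogeneity forces $c=0$, hence $\phi=0$, closing the induction. The base case $n=i+1$ is handled by the very same argument: here each hyperplane $H$ already lies in $\Grass_i$, so the identity $\phi|_H=\Klain_\phi(H)\vol_{n-1}=0$ is available directly without any inductive hypothesis.

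The only genuinely substantive input is Klain's simple valuation theorem, which is being used as a black box; the reduction above is otherwise formal. The single point requiring a moment's care is the compatibility of $\Klain$ with restriction to linear subspaces, i.e.\ the identification $\Klain_{\phi|_H}=\Klain_\phi|_{\Grass_i(H)}$, but this is immediate from the definitions of both sides as the proportionality factor with $i$-dimensional volume.
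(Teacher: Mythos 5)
Your argument is correct, and it is a clean reconstruction of the standard route to Klain's injectivity theorem: reduce to showing that $\Klain_\phi\equiv 0$ forces $\phi$ to be simple (by induction on $n$ via restriction to hyperplanes and the compatibility $\Klain_{\phi|_H}=\Klain_\phi|_{\Grass_i(H)}$), then invoke Klain's classification of even, simple, translation-invariant, continuous valuations and a degree-of-homogeneity comparison. The paper itself does not reprove the theorem but simply cites Klain's original article; your proof matches the approach outlined in the paragraph preceding the statement, which records exactly the simple valuation theorem you use as a black box.
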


Finally, let us recall the following result of McMullen \cite{mcmullen83}.
Leaving the defining Equation \eqref{eq_defval} unchanged, we can also consider valuations defined on (subsets of) convex polytopes or polyhedral cones.
A valuation $\phi$ on $\calP(\RR^n)$ is called \emph{dilatation continuous} if $t\mapsto \phi(tP)$, $t\geq0$, is continuous for each $P\in\calP(\RR^n)$. A valuation
on the set of polyhedral cones with apex $0$ of dimension at most $i$ is called \emph{simple} if it vanishes on cones of dimension less than $i$. 

\begin{theorem}[McMullen \cite{mcmullen83}]\label{thmvalonpoly}
	A function $\phi:\calP(\RR^n)\rightarrow\CC$ is a translation-invariant and dilatation continuous valuation
	if and only if
		$$\phi(P)=\sum_{i=0}^n\sum_{F\in\calF_i(P)} \lambda_{n-i}(F,P)\vol_i(F),$$
	for every $P \in \calP(\RR^n)$,
	where $\lambda_{n-i}$ is a simple valuation on the set of polyhedral cones with apex $0$ of dimension at most $n-i$
	and $\lambda_{n-i}(F,P)=\lambda_{n-i}(N(F,P))$.
\end{theorem}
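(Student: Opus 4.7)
The plan is to prove the two implications separately. The ``if'' direction is essentially verification, while the ``only if'' direction requires a structural decomposition of the given valuation.

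For the ``if'' direction: translation invariance is immediate from $\vol_i(F+x)=\vol_i(F)$ and $N(F+x,P+x)=N(F,P)$; dilatation continuity reduces to the observation that for $t>0$ the polytope $tP$ has the same face lattice as $P$ with $i$-face volumes scaled by $t^i$, so $\phi(tP)$ is polynomial in $t$ on $(0,\infty)$ (continuity at $t=0$ is guaranteed by the assumption that each $\lambda_{n-i}$ is simple on cones, which kills all contributions of the collapsing polytope $tP \to \{0\}$ except those attached to the single vertex); and the valuation identity, after grouping terms face-by-face, follows from the combinatorial compatibility of face structures under union and intersection, together with the hypothesis that each $\lambda_{n-i}$ is itself a valuation on cones.

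For the ``only if'' direction, I would proceed in three steps. Step one: use translation invariance and dilatation continuity to establish that $t\mapsto\phi(tP)$ is polynomial in $t$ of degree at most $n$ for every $P\in\calP(\RR^n)$. The standard route subdivides $mP$ into $m^n$ translates of a fundamental parallelepiped, iterates the valuation identity, and extends from integer to real dilatations via dilatation continuity. This yields a canonical splitting $\phi=\phi_0+\cdots+\phi_n$ into translation-invariant, $i$-homogeneous, dilatation-continuous valuations on polytopes. Step two: for each $i$-homogeneous piece $\phi_i$, show that $\phi_i$ vanishes on polytopes of dimension less than $i$ (by homogeneity applied along the missing directions combined with translation invariance) and derive a face decomposition $\phi_i(P)=\sum_{F\in\calF_i(P)}\lambda(F,P)\vol_i(F)$, with coefficients extracted by descending induction on $n-\dim F$, peeling off at each stage the contributions of higher-dimensional faces supplied by the inductive hypothesis. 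Step three: show $\lambda(F,P)$ depends only on $N(F,P)$ by comparing two polytopes sharing $F$ with identical normal cones at $F$ and applying the valuation identity so that the $F$-contributions on both sides cancel; the valuation and simplicity properties of the resulting $\lambda_{n-i}$ on $(n-i)$-dimensional cones then transport from those of $\phi_i$ (cones of dimension strictly less than $n-i$ arise as normal cones of faces of dimension strictly greater than $i$, whose contribution lives in a different homogeneous piece).

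The main obstacle is step two: passing from a globally defined, $i$-homogeneous valuation to a face-local weighted sum. The delicate point is that $\phi_i$ has no intrinsic local density on faces, so one must argue that its value on $P$ is determined additively by what happens near each $i$-face. The cleanest route is via Möbius inversion on the face poset (or equivalently the normal fan), combined with the Euler relation for polytopes, to express $\lambda(F,P)$ as an explicit alternating sum of values of $\phi_i$ on auxiliary polytopes built from $F$ and its star. Once this local formula is in place, the normal-cone dependence of step three becomes essentially automatic, since the auxiliary polytopes themselves depend only on the local cone structure at $F$.
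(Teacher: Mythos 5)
The paper gives no proof of Theorem~\ref{thmvalonpoly}: it is quoted from McMullen's 1983 article and used as a black box, so there is no internal argument to compare your sketch against.

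Judged on its own, your sketch has the right skeleton (polynomiality of $t\mapsto\phi(tP)$ to obtain the homogeneous splitting, vanishing of the $i$\nobreakdash-homogeneous piece on polytopes of dimension $<i$, locality at $i$-faces, and dependence of the weight on the normal cone alone), but the two places where the real work sits are treated as if they were routine, and as described they do not go through. In the \emph{if} direction, the valuation identity is not a matter of ``grouping terms face-by-face'': when $P\cup Q$ is convex, $i$-faces of $P$ can be cut by $P\cap Q$, new $i$-faces appear along the cut, and normal cones change across it; one must actually show that for each affine $i$-flat $L$ the map $P\mapsto\lambda_{n-i}\bigl(N(P\cap L,P)\bigr)\,\vol_i(P\cap L)$ is itself a valuation, which rests on a genuine compatibility of the tangent/normal fans of $P$, $Q$, $P\cup Q$, $P\cap Q$ at points of $L$ and is not automatic. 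In the \emph{only if} direction, your Step two is the crux and as written it is circular: once $\phi_i$ has been isolated, faces of dimension other than $i$ contribute nothing to it, so there is no ``peeling off by descending induction on $n-\dim F$'' to be done -- the entire difficulty is to prove that $\phi_i$ is \emph{local} at its $i$-faces at all, and M\"obius inversion on the face poset has no target to invert against, since $\phi_i$ is not presented as a sum over faces; that sum is precisely what has to be constructed. The standard route (and McMullen's) is to build the weight $\lambda_{n-i}(N(F,P))$ directly by a truncation or limiting procedure near each $i$-face -- which is where dilatation continuity is actually consumed -- and only then to verify the valuation and simplicity properties of the resulting cone functional; your sketch offers no substitute for that construction.
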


\subsection{Integral transformations on Grassmannians}

For integers $0<i,j<n$ and $F\in\Grass_j$, we denote by $\Grass_i^F$ the set of all linear subspaces $E\in \Grass_i$
for which either $E\subset F$, $E=F$, or $E\supset F$ depending on whether $i<j$, $i=j$, or $i>j$.

The \emph{Radon transform} is the continuous, linear operator $R_{ji}: C(\Grass_i)\rightarrow C(\Grass_j)$ defined by
	$$R_{ji}f(F)=\int_{\Grass_i^F} f(E)\; d\nu_F(E) , \quad F \in \Grass_j,$$
where $\nu_F$ denotes the unique rotation-invariant probability measure on $\Grass_i^F$.

It is well-known, see e.g.\ \cite{schneider_weil92}*{Satz 6.1.1}, that
\begin{equation}\label{euclid_eqhaarmeas}
	\int_{\Grass_j} \int_{\Grass_i^F} f(E,F)\; d\nu_F(E)\; dF= \int_{\Grass_i}\int_{\Grass_j^E} f(E,F)\;d\nu_E(F) dE,
\end{equation}
for each $f\in C(\Grass_i\times \Grass_j)$.
As a consequence, we obtain the relations
	$$R_{ki}= R_{kj}\circ R_{ji} \quad\text{and}\quad R_{ik}= R_{ij}\circ R_{jk}$$
for all integers $0< i\leq j\leq k<n$.
Since
	$$\perp\circ R_{n-i,n-j}=R_{ij}\circ\perp$$
each one of the above relations implies the other.
Let $L_2(\Grass_i)$ denote the Hilbert space of square-integrable functions on $\Grass_i$.
The Radon transform can be extended to a continuous, linear operator $R_{ji}: L_2(\Grass_i)\rightarrow L_2(\Grass_j)$.
An application of (\ref{euclid_eqhaarmeas}) shows that $R_{ij}$ is the adjoint of $R_{ji}$, that is
\begin{equation}\label{euclid_eqradadjoint}
	\left( R_{ji} f, g\right)=\left(f,  R_{ij} g\right)
\end{equation}
for all $f\in L_2(\Grass_i)$ and $g\in L_2(\Grass_j)$.
Here $( \, \cdot \, {,} \, \cdot \, )$ denotes the $L_2$ inner product on the spaces $L_2(\Grass_i)$ and  $L_2(\Grass_j)$, respectively.

We remark that the Radon transform is an intertwining operator,
i.e.\ the Radon transform commutes with the $SO(n)$-actions on the spaces $C(\Grass_i)$ (and hence also $L_2(\Grass_i)$),
maps smooth functions to smooth functions, and $R_{ji}: C^\infty(\Grass_i)\rightarrow C^\infty(\Grass_j)$ is continuous in the $C^\infty$-topology.
Using relation \eqref{euclid_eqradadjoint}, we can easily extend the domain of the Radon transform to measures or distributions.
For a signed Borel measure on $\Grass_i$ we define $R_{ji}\mu$ by
\begin{equation}\label{defradonmeasure}
	\int_{\Grass_j} g\; d(R_{ji}\mu)= \int_{\Grass_i} R_{ij}g\; d\mu,\qquad g\in C(\Grass_j).
\end{equation}
More information on the Radon transform may be found in \cites{helgason00,helgason11}.
Recently, Alesker \cite{alesker10} established the existence of a Radon transform of smooth valuations
which contains the classical Radon transform of smooth functions and the Radon transform of constructible functions as special cases.

For $E, F\in\Grass_i$ the \emph{cosine of the angle between $E$ and $F$} is the number
\begin{equation}\label{euclid_eqcosdef}
	\cos(E,F) :=\frac{\vol_i(A|E)}{\vol_i(A)},
\end{equation}
where $A\subset F$ is a convex body of non-zero $i$-dimensional volume.
It is not difficult to show that
\begin{equation}\label{euclid_eqcossym}
	\cos(E,F) = \cos(F,E), \qquad \ \cos(E^\perp,F^\perp) = \cos(E,F),
\end{equation}
and
\begin{equation}\label{euclid_eqcos1}
	 \cos(E,F) \leq 1.
\end{equation}

The \emph{cosine transform} is the continuous, linear operator $C_i: C(\Grass_i)\rightarrow C(\Grass_i)$, defined by
	$$C_if(E)=\int_{\Grass_i} \cos(E,F) f(F)\; dF, \quad E \in \Grass_i.$$

Using that $\|f\|_1\leq \|f\|_2$ on probability measure spaces and \eqref{euclid_eqcos1}, we obtain
	$$\|C_if\|^2_2=\int_{\Grass_i} \left(\int_{\Grass_i} \cos(E,F) f(F)\; dF\right)^2dE\leq \|f\|_2^2 $$
for all $f\in C(\Grass_i)$.
Hence, also the cosine transform can be extended to a continuous operator on $L_2(\Grass_i)$.
It follows from (\ref{euclid_eqcossym}) that the cosine transform is a self-adjoint operator, i.e.\
	$$(C_if,g)=(f,C_ig)$$
for all $f,g\in L_2(\Grass_i)$ and that
\begin{equation}\label{euclid_eqcosperp}
	\perp \circ C_i=C_{n-i}\circ \perp.
\end{equation}

Like the Radon transform, the cosine transform is an $SO(n)$-intertwining operator.
As in the case of the Radon transform, this implies that $C_i$ maps smooth functions to smooth functions
and that $C_i: C^\infty(\Grass_i)\rightarrow C^\infty(\Grass_i)$ is continuous in the $C^\infty$-topology.
For a signed Borel measure on $\Grass_i$ we define $C_i\mu\in C(\Grass_i)$, the cosine transform  of $\mu$, by
	$$C_i\mu(E)=\int_{\Grass_i}  \cos(E,F) \; d\mu(F), \quad E \in \Grass_i.$$

For more information on the cosine transform we refer the reader to \cite{goodey_zhang98} and \cite{alesker_bernstein04}.
The cosine transform plays an important role in the theory of translation-invariant, continuous valuations (see \cite{alesker03}).
This connection works both ways, as the image of the cosine transform was determined in \cite{alesker_bernstein04} using results from valuation theory.

	\section{\texorpdfstring{Polytopal members of $\calG(i)$}{Polytopal members of G(i)}}
		The aim of this section is to give a characterization of the polytopal members of $\calG(i)$,
slightly generalizing results obtained by Goodey and Weil \cite{goodey_weil91} for $\calK(i)$.
For the convenience of the reader we have decided to include a complete proof
instead of referring to the various arguments and results scattered in \cite{goodey_weil91} and pointing out modifications.

If $K\in\calK(\RR^n)$ is centrally symmetric, then $\vol_1(K|\RR u)=2h(K,u)$, $u\in S^{n-1}$.
Hence, $K\in\calK(1)$ (or $K\in\calG(1)$) if and only if $K$ is a (generalized) zonoid, see e.g.\ \cite{schneider93}*{Section 3.5}.
Moreover, since
	$$\vol_{n-1}(K|u^\perp)=\frac{1}{2}\int_{S^{n-1}} |u\cdot v|\; dS_{n-1}(K,u), \qquad u\in S^{n-1},$$
we obtain that $\calK(n-1)=\calG(n-1)$ consists of all centrally symmetric convex bodies.
From \cite{schneider93}*{Theorem 5.3.1} one can deduce that $\calK(1)\subset\calK(i)$ and $\calG(1)\subset\calG(i) $, see e.g.\ \cite{weil79}*{Theorem 2.2}.
Thus, we have the inclusions
	$$\calK(1)\subset\calK(i)\subset\calK(n-1)\quad \text{and}\quad \calG(1)\subset\calG(i)\subset\calG(n-1).$$

For $E\in \Grass_i$ we denote by $V(K_1,\ldots,K_{i}:E)$ the mixed volume of the
orthogonal projections of $K_1, \ldots, K_{i}$ on $E$ computed in $E$.
\begin{lemma}[Schneider \cite{schneider97}]\label{lemproj}
	If $K\in\calG(i)$, then
	\begin{equation}\label{eqproj}
	V(K[i],L_1,\ldots,L_{n-i})=\binom{n}{i}^{-1}\int_{\Grass_i} V(L_1,\ldots,L_{n-i}:E^\perp)\; d\mu_K(E)
	\end{equation}
for any convex bodies $L_1,\ldots,L_{n-i}\in \calK(\RR^n)$.
\end{lemma}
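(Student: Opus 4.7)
The plan is to apply the injectivity of the Klain map to (\ref{eqproj}) after reducing to an identity for valuations in a single convex body. Both sides of (\ref{eqproj}), as functions of $(L_1,\ldots,L_{n-i})$, are symmetric and Minkowski-additive in each slot: for the right-hand side this uses that projection onto $E^\perp$ commutes with Minkowski sum, together with the standard multilinearity of the mixed volume on $E^\perp$. By the standard polarization principle for symmetric, continuous, Minkowski-multilinear functions on $\calK(\RR^n)^{n-i}$, it therefore suffices to prove the diagonal identity
\begin{equation}\label{eqdiag}
V(K[i],L[n-i])=\binom{n}{i}^{-1}\int_{\Grass_i}\vol_{n-i}(L|E^\perp)\,d\mu_K(E)
\end{equation}
for every $L\in\calK(\RR^n)$.

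Next, I would check that both sides of (\ref{eqdiag}) belong to $\Val_{n-i}^+$ as functions of $L$. The left-hand side is clearly a continuous, translation-invariant, $(n-i)$-homogeneous valuation in $L$, and is even in $L$ because the central symmetry of $K$ yields $V(K[i],(-L)[n-i])=V((-K)[i],L[n-i])=V(K[i],L[n-i])$. For the right-hand side, the identity $\vol_{n-i}(L|E^\perp)=\binom{n}{n-i}V(L[n-i],I_E[i])$, where $I_E\subset E$ is any convex body with $\vol_i(I_E)=1$, is just (\ref{eqprojmixed}) with the roles of $E$ and $E^\perp$ swapped. This exhibits the integrand as a continuous, translation-invariant, $(n-i)$-homogeneous valuation in $L$, and it is manifestly even since $-(L|E^\perp)=(-L)|E^\perp$. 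Uniform boundedness of the integrand on Hausdorff-bounded sets of $L$ then permits integration against the finite signed measure $\mu_K$, and the result is an element of $\Val_{n-i}^+$.

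To conclude, I would compute the Klain functions of both sides of (\ref{eqdiag}) at an arbitrary $F\in\Grass_{n-i}$ by restricting to $L\subset F$. On the left, (\ref{eqprojmixed}) applied with $E=F^\perp\in\Grass_i$ gives $V(K[i],L[n-i])=\binom{n}{i}^{-1}\vol_i(K|F^\perp)\vol_{n-i}(L)$, so the Klain value at $F$ is $\binom{n}{i}^{-1}\vol_i(K|F^\perp)$. On the right, (\ref{euclid_eqcosdef}) applied to $L\subset F$ yields $\vol_{n-i}(L|E^\perp)=\cos(F,E^\perp)\vol_{n-i}(L)$, and then (\ref{euclid_eqcossym}) together with the defining relation (\ref{eqintrep}) of $\calG(i)$ applied at $F^\perp\in\Grass_i$ give
\begin{equation*}
\int_{\Grass_i}\cos(F,E^\perp)\,d\mu_K(E)=\int_{\Grass_i}\cos(F^\perp,E)\,d\mu_K(E)=\vol_i(K|F^\perp).
\end{equation*}
Hence both Klain functions equal $\binom{n}{i}^{-1}\vol_i(K|F^\perp)$ at every $F$, and Theorem~\ref{thmklain} forces the two valuations to coincide, establishing (\ref{eqdiag}).

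The main obstacle is less conceptual than combinatorial: carefully tracking dimensions, orthogonal complements, and the binomial normalization constants that appear in (\ref{eqprojmixed}) and in the definition of the Klain function is what makes the cancellations in the final step work out so cleanly. Beyond that bookkeeping, the proof only draws on standard properties of mixed volumes, the symmetry relations for the cosine of the angle between subspaces, and Klain's injectivity theorem.
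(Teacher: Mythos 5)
Your proposal is correct and follows essentially the same route as the paper: fix $K$, view both sides of the diagonal case $L_1=\cdots=L_{n-i}=L$ as elements of $\Val_{n-i}^+$, compute their Klain functions using \eqref{eqprojmixed}, \eqref{euclid_eqcosdef}, \eqref{euclid_eqcossym} and \eqref{eqintrep}, invoke Klain's injectivity (Theorem~\ref{thmklain}), and then polarize. The only difference is cosmetic — you polarize at the outset while the paper polarizes at the end — and the bookkeeping of constants matches the paper exactly.
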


\begin{remark}
	Goodey and Weil \cite{goodey_weil91} proved Lemma~\ref{lemproj} under the additional assumption
	that $L_1,\ldots,L_{n-i}$ are \emph{centrally symmetric} and raised the question whether (\ref{eqproj}) holds true in general.
	The proof given below using the Klain map is due to Schneider \cite{schneider97}.
	For completeness and since it is a nice application of the injectivity of the Klain map, we have included it here.
\end{remark}

\begin{proof}[Proof of Lemma~\ref{lemproj}]
	We define  valuations $\phi,\psi\in\Val_{n-i}^+$ by
		$$\phi(L)=V(K[i],L[n-i]) \quad\text{and}\quad \psi(L)=\binom{n}{i}^{-1}\int_{\Grass_i} \vol_{n-i}(L|E^\perp)\; d\mu_K(E).$$
	From the definition of the Klain map, \eqref{eqprojmixed}, \eqref{eqintrep}, and \eqref{euclid_eqcossym} we deduce that
		$$\Klain_\phi(F)= \binom{n}{i}^{-1} \vol_i(K|F^\perp)=  \binom{n}{i}^{-1} \int_{\Grass_i} \cos(F,E^\perp) \; d\mu_K(E).$$
	On the other hand, from the definition of the cosine of the angle between two subspaces (\ref{euclid_eqcosdef}) we obtain
		$$\Klain_\psi(F)= \binom{n}{i}^{-1} \int_{\Grass_i} \cos(F,E^\perp) \; d\mu_K(E).$$
	Thus, $\Klain_\phi=\Klain_\psi$.
	The  injectivity of the Klain map (Theorem~\ref{thmklain}) therefore yields $\phi=\psi$.
	We conclude that (\ref{eqproj}) holds if $L=L_1=\ldots= L_{n-i}\in\calK(\RR^n)$.
	To deduce the general case from this, just put $L=\lambda_1L_1+\cdots+\lambda_{n-i} L_{n-i}$
	and expand both sides of (\ref{eqproj}) into polynomials in $\lambda_1,\ldots,\lambda_{n-i} \geq 0$.
\end{proof}

If $K\in\calK(\RR^n)$ is origin symmetric, then $h_K$ is an even function on the sphere and hence can be viewed as a function on $\Grass_1$.
Similarly, if $\mu$ is a symmetric measure on $S^{n-1}$ we may view $\mu$ as a measure on $\Grass_1$ and vice versa.

\begin{lemma}[Goodey and Weil \cite{goodey_weil91}]\label{lemradon}
	If $K\in\calK(\RR^n)$ is origin symmetric and $1\leq i\leq n-1$, then
		$$V(K,B^n[i-1]: E)=\omega_{i} R_{i,1}(h_K)(E), \qquad E\in\Grass_{i}.$$
\end{lemma}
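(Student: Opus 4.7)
The plan is to rewrite both sides as integrals of $h_K$ over the unit sphere $S^{i-1}(E)$ of $E$ and compare the normalizations. Since $K$ is origin symmetric, $h_K$ is even on $S^{n-1}$ and hence descends to a function on $\Grass_1$, which is precisely what is required to interpret $R_{i,1}(h_K)$.

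For the left-hand side, I would apply the integral representation of mixed volumes inside $E$ to the bodies $K|E$ and $B_E[i-1]$, where $B_E = B^n \cap E = B^n|E$ is the Euclidean unit ball of $E$. The surface area measure $S(B_E[i-1],\,\cdot\,)$ of $B_E$ in $E$ is the spherical Lebesgue measure $\calH^{i-1}$ on $S^{i-1}(E)$ (by the same argument that gives $S_{n-1}(B^n,\,\cdot\,) = \calH^{n-1}|_{S^{n-1}}$). Combined with the identity $h_{K|E}(u) = h_K(u)$ for $u \in E$, this yields
$$V(K, B^n[i-1] : E) = \frac{1}{i} \int_{S^{i-1}(E)} h_K(u) \, d\calH^{i-1}(u).$$

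For the right-hand side, I would identify $\Grass_1^E$ with $S^{i-1}(E)/\{\pm 1\}$. The unique $SO(E)$-invariant probability measure $\nu_E$ on $\Grass_1^E$ pulls back under the $2$-to-$1$ antipodal covering to $(i\omega_i)^{-1}\calH^{i-1}$ on $S^{i-1}(E)$, and the evenness of $h_K$ allows integration over the sphere without an additional factor, giving
$$R_{i,1}(h_K)(E) = \frac{1}{i\omega_i} \int_{S^{i-1}(E)} h_K(u) \, d\calH^{i-1}(u).$$
Comparing the two displays yields the claimed equality $V(K, B^n[i-1] : E) = \omega_i R_{i,1}(h_K)(E)$.

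I do not foresee a substantive obstacle; the only point requiring care is the bookkeeping of normalizations, namely the total mass $i\omega_i$ of spherical Lebesgue on $S^{i-1}$ and the fact that the factor of $2$ from the antipodal quotient is absorbed by the evenness of $h_K$. As a sanity check, specializing to $K = B^n$ makes both sides equal $\omega_i$.
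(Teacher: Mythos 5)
Your proof is correct, but it takes a genuinely different route from the paper. The paper works in the ambient space $\RR^n$: it introduces the degenerate body $B_{E^\perp}$ (the $(n-i)$-ball of unit volume in $E^\perp$), uses \eqref{eqprojmixed} to write $V(K_1,\ldots,K_i:E)=\binom{n}{i}V(K_1,\ldots,K_i,B_{E^\perp}[n-i])$, and then expands via the mixed area measure, reducing to the $(n-i)$-th area measure $S_{n-i}(B_{E^\perp},\,\cdot\,)$. The nontrivial step is then identifying this measure of a \emph{lower-dimensional} body as uniformly distributed on $S^{n-1}\cap E$ with total mass $\binom{n}{i}^{-1}n\omega_i$; the paper refers to \eqref{eqsurfpoly} for this. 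You instead work intrinsically inside the $i$-dimensional space $E$ from the outset: you expand $V_E(K|E, B_E[i-1])$ directly via the mixed volume integral representation in $E$, where the relevant measure is simply the surface area measure of the full-dimensional unit ball in $E$, namely $\calH^{i-1}|_{S^{i-1}(E)}$. Both arguments reduce to the same identity $V(K,B^n[i-1]:E)=\tfrac1i\int_{S^{i-1}(E)}h_K\,d\calH^{i-1}$ and then match it with the Radon transform via the $2$-to-$1$ antipodal quotient and the total mass $i\omega_i$; your route avoids the degenerate body $B_{E^\perp}$ and the attendant bookkeeping, at the cost of not reusing \eqref{eqprojmixed}, which the paper has already established and reuses elsewhere.
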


\begin{proof}
	Let $B_{E^\perp}$ be the $(n-i)$-dimensional ball in $E^\perp$ centered at the origin with $\vol_{n-i}(B_{E^\perp}) = 1$.
	By \eqref{eqprojmixed} we have
		$$ \vol_i(K|E) = \binom{n}{i} V(K[i],B_{E^\perp}[n-i]). $$
	Similar to the proof of Lemma \ref{lemproj} we get
	\begin{equation}\label{eqmixedvolproj}
		V(K_1,\ldots,K_i:E) = \binom{n}{i} V(K_1,\ldots,K_i,B_{E^\perp}[n-i])
	\end{equation}
	for all $ K_1,\ldots,K_i \in \calK(\RR^n)$.
	Using the definition of the $(n-i)$-th surface area measure, we get for $K \in \calK(\RR^n)$
	\begin{align*}
		V(K,B[i-1]:E)
		&= \binom{n}{i} V(K,B^n[i-1],B_{E^\perp}[n-i]) \\
		&= \binom{n}{i} \frac 1 n \int_{S^{n-1}} h(K,u) \; dS_{n-i}(B_{E^\perp},u). \\
	\end{align*}
	Since the measure $S_{n-i}(B_{E^\perp},\;\cdot\;)$ is uniformly distributed on $S^{n-1} \cap E$
	with total mass $\binom{n}{i}^{-1} n \omega_i$ (cf.\ Equation \eqref{eqsurfpoly}),
	we get the desired equation.
\end{proof}

The following theorem gives the desired characterization of the polytopal members of $\calG(i)$.

\begin{theorem}\label{proppoly}
	Let $P\in \calK(\RR^n)$ be a centrally symmetric polytope.
	Then  $P\in\calG(i)$ if and only if $P$ has centrally symmetric $(i+1)$-faces.
	In particular, $\calG(i)\neq \calG(n-1)$ if $i\neq n-1$.
\end{theorem}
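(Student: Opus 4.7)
The plan is to follow the strategy of Goodey and Weil \cite{goodey_weil91}, adapting their argument for $\calK(i)$ to the slightly more general signed-measure setting of $\calG(i)$.

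For the $(\Leftarrow)$ direction, the construction of Goodey and Weil produces under the centrally symmetric $(i+1)$-face hypothesis a \emph{positive} Borel measure $\mu_P$ satisfying \eqref{eqintrep}, which a fortiori places $P$ in $\calK(i)\subset\calG(i)$. Concretely one takes $\mu_P=\tfrac12\sum_{F\in\calF_i(P)}\vol_i(F)\,\delta_{E_F}$ with $E_F=\spn(F-F)\in\Grass_i$, and verifies \eqref{eqintrep} by summing over the $(i+1)$-faces $G$ of $P$ the hyperplane-projection identity $\vol_i(G|E)=\tfrac12\sum_{F'\in\calF_i(G)}\vol_i(F'|E)$, valid for any centrally symmetric $(i+1)$-polytope $G$ and any $E\in\Grass_i$ because opposite $i$-faces of $G$ contribute equally to the projection.

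For the $(\Rightarrow)$ direction, suppose $P\in\calG(i)$ with signed $\mu_P$. I would apply Lemma~\ref{lemproj} with $L_1=\cdots=L_{n-i-1}=B^n$ and $L_{n-i}=L$ variable. By definition of the $i$-th area measure the left-hand side equals $\tfrac{1}{n}\int_{S^{n-1}}h_L\,dS_i(P,\cdot)$, while by Lemma~\ref{lemradon} applied inside each $E^\perp$ the right-hand side becomes a spherical integral of $h_L$ against a measure on $S^{n-1}$ obtained from $\mu_P$ via a Radon-type transform supported on the great subspheres $S^{n-1}\cap E^\perp$. Equating over all centrally symmetric $L$ and using that an even spherical measure is determined by its support-function integrals, I would identify $S_i(P,\cdot)$ with an explicit image of $\mu_P$. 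Comparing this identification with the face decomposition \eqref{eqsurfpoly} forces, for each pair of opposite $i$-faces $F,F'$ contained in a common $(i+1)$-face $G$, both $\vol_i(F)=\vol_i(F')$ and antipodality of their outer normals within the affine hull of $G$, which is precisely the central symmetry of $G$.

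The ``in particular'' clause then follows by exhibiting for $0<i<n-1$ a centrally symmetric polytope with a non-centrally-symmetric $(i+1)$-face, for instance the Cartesian product of an $(i+2)$-dimensional cross-polytope with an $(n-i-2)$-dimensional cube: its $(i+1)$-faces include $(i+1)$-simplices (facets of the cross-polytope times vertices of the cube), which are not centrally symmetric, so by the characterization this polytope lies in $\calG(n-1)\setminus\calG(i)$. The main obstacle throughout is the coefficient-matching step in the $(\Rightarrow)$ direction: in the positive-measure setting of Goodey and Weil, cancellations among face contributions are prohibited automatically by positivity of $\mu_P$, whereas here the same rigidity must be extracted from the algebraic equality \eqref{eqintrep} together with the injectivity of the Klain map (Theorem~\ref{thmklain}) and of the cosine transform on signed measures.
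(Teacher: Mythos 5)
Your outline of the $(\Rightarrow)$ direction matches the paper's in spirit: use Lemma~\ref{lemproj} with one variable body $L$ and $n-i-1$ copies of $B^n$, pass through Lemma~\ref{lemradon} to write $S_i(P,\,\cdot\,)$ as a Radon transform of $\mu_P^\perp$, compare with \eqref{eqsurfpoly}, and extract from this that opposite $i$-faces in each $(i+1)$-face pair up. The paper makes this precise via three steps you did not spell out: (1) the observation that $R_{1,n-i}$ of \emph{any} signed measure is, on lines inside a fixed $(n-i)$-subspace, necessarily proportional to the uniform measure; (2) since $\supp S_i(P,\,\cdot\,)$ is a finite union of unit spheres of $(n-i)$-subspaces, this forces $S_i(P,A)=\sum_j\alpha_j\,\calH^{n-i-1}(A\cap E_j)$ with $\alpha_j>0$, whence $\bigcup_j E_j=\bigcup_F N(F,P)$ and parallel $i$-faces have equal $i$-volume; (3) uniqueness in the Minkowski problem, applied inside the affine hull of an $(i+1)$-face $G$, then yields central symmetry of $G$. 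You gesture at (2)--(3) but skip (1), which is exactly the rigidity you correctly flag as the main obstacle; the paper obtains it not from ``injectivity of the cosine transform on signed measures'' but from the structural fact about $R_{1,n-i}\mu$.

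The $(\Leftarrow)$ direction, however, contains a genuine error. The measure $\mu_P=\tfrac12\sum_{F\in\calF_i(P)}\vol_i(F)\,\delta_{E_F}$ does \emph{not} satisfy \eqref{eqintrep} when $i<n-1$. Unwinding your formula, \eqref{eqintrep} would read $\vol_i(P|E)=\tfrac12\sum_{F\in\calF_i(P)}\vol_i(F|E)$, which is Cauchy's projection formula and holds only for $i=n-1$. For example for $P=[-1,1]^n$ and $i=1$, $\vol_1(P|\RR u)=2\sum_k|u_k|$, whereas $\tfrac12\sum_{F\text{ edges}}\vol_1(F|\RR u)=2^{n-1}\sum_k|u_k|$. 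Moreover your proposed verification --- summing the $(i+1)$-dimensional Cauchy identity $\vol_i(G|E)=\tfrac12\sum_{F'\in\calF_i(G)}\vol_i(F'|E)$ over all $(i+1)$-faces $G$ --- cannot produce $\vol_i(P|E)$: the left side does not telescope, and on the right each $i$-face is counted with a combinatorial multiplicity $|\{G:F\subset G\}|$ that is not constant. The correct weights at the atoms $E_F$ are genuinely nontrivial; the paper produces them implicitly by applying McMullen's weakly continuous valuation theorem (Theorem~\ref{thmvalonpoly}) to $\phi(P)=\vol_i(P|L)$, combined with the geometric fact (which must be proved from the centrally-symmetric-$(i+1)$-face hypothesis by an edge-path argument on the projection of $P$ onto $E_F^\perp$) that the normal cones of $i$-faces in a fixed parallel class union to an entire $(n-i)$-dimensional subspace. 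Without that mechanism your construction does not go through.

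Your explicit example for the ``in particular'' clause (a cross-polytope cross a cube, which has simplicial $(i+1)$-faces) is fine and does the job for $0<i<n-1$.
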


\begin{proof}
	Suppose $P\in\calG(i)$ is a polytope.
	From (\ref{eqproj}) we obtain
		$$V(P[i],L,B^n[n-i-1])=\binom{n}{i}^{-1}\int_{\Grass_i} V(L,B^n[n-i-1]:E^\perp)\; d\mu_P(E)$$
	whenever  $L\in\calK(\RR^n)$ is an origin symmetric convex body.
	Using Lemma~\ref{lemradon} and \eqref{defradonmeasure}, we arrive at
	\begin{align*}
		V(P[i],L,B^n[n-i-1])&=\omega_{n-i}\binom{n}{i}^{-1}\int_{\Grass_i} R_{n-i,1}h_L(E^\perp)\; d\mu_P(E)\\
		&=\omega_{n-i}\binom{n}{i}^{-1}\int_{\Grass_{n-i}} R_{n-i,1}h_L(E)\; d\mu_P^\perp(E)\\
		&=\omega_{n-i}\binom{n}{i}^{-1}\int_{\Grass_1} h_L\; d(R_{1,n-i}\mu_P^\perp).
	\end{align*}
	But on the other hand
		$$V(P[i],L,B^n[n-i-1])= \frac{1}{n}\int_{S^{n-1}} h_L(u)\; dS_i(P,u).$$
	Since this holds for all $L$, we conclude
	\begin{equation}\label{eqarearad}
		S_i(P,\;\cdot\;)=n\omega_{n-i}\binom{n}{i}^{-1} R_{1,n-i}\mu_P^\perp.
	\end{equation}

	Let $\mu$ be a signed Borel measure on $\Grass_{n-i}$.
	We claim that the measure $R_{1,n-i}\mu$ is evenly distributed on lines contained in $(n-i)$-dimensional subspaces.
	Indeed, fix a subspace $E\in\Grass_{n-i}$ and let $A\subset \Grass_1$ be a Borel set such that every element of $A$ is a subset of $E$.
	We compute
		$$R_{1,n-i}\mu(A) = \int_{\Grass_{n-i}} R_{n-i,1}\chi_A\; d\mu = \nu_E(A)\mu(\{E\}),$$
	where $\nu_E$ is the Haar measure on $\Grass_1^E$, the set of lines contained in $E$.
	Since the support of $S_i(P,\;\cdot\;)$ is contained in a finite union of $(n-i)$-dimensional subspaces, see \eqref{eqsurfpoly},
	and by \eqref{eqarearad} the measure $S_i(P,\;\cdot\;)$ is the Radon transform of some signed measure on $\Grass_{n-i}$,
	we obtain that there are positive constants $\alpha_1,\ldots,\alpha_m$ and subspaces $E_1,\ldots,E_m\in\Grass_{n-i}$ such that
		$$S_i(P,A)=\sum_{j=1}^m \alpha_j \calH^{n-i-1}(A\cap E_j)$$
	for every Borel set $A\subset S^{n-1}$.
	From this together with \eqref{eqsurfpoly}, we deduce
	\begin{equation}\label{equnioncones}
		\bigcup_{j=1}^m E_j= \bigcup_{F\in \calF_i(P)} N(F,P)
	\end{equation}
	and that parallel $i$-faces have the same $i$-dimensional volume.

	Let $G$ be an $(i+1)$-face of $P$ and $F$ and $i$-face of $G$.
	Recall that $N(G,P)$ is a facet of $N(F,P)$.
	By \eqref{equnioncones} there must be an $i$-face $F'$ of $P$ parallel to $F$ such that $N(G,P)$ is also a facet of $N(F',P)$.
	Therefore $F'$ must be contained in $G$.
	Hence the $i$-faces of $G$ appear in parallel pairs of the same volume. It is well-known that two solutions of the Minkowski problem differ only by a translation. Considering the Minkowski problem in the affine subspace spanned by $G$, we deduce that $-G$ must be a translate of $G$. Hence $G$ is centrally symmetric.

	Conversely, let $0<i<n-1$ and suppose that $P$ is a polytope with centrally symmetric $(i+1)$-faces.
	Let $F$ be an $i$-face of $P$ and suppose $G$ is any $(i+1)$-face containing $F$.
	Since $G$ is centrally symmetric, the reflection in the center of $G$ carries $F$ to a translate of $-F$.
	Any $(i+1)$-face containing this translate must contain for the same reason a translate of $F$, and so on.
	Let $P'$ denote the projection of $P$ onto the subspace orthogonal to $F$.
	The $i$-face $F$ corresponds to a vertex $F'$ of $P'$ and the $(i+1)$-faces containing $F$ correspond to edges of $P'$ containing $F'$.
	Note that any vertex of $P'$ can be connected by an edge path to the vertex $F'$.
	Together with the above we see that vertices of $P'$ are projections of $i$-faces of $P$ parallel to $F$.
	This implies that the union of the normal cones of $i$-faces parallel to $F$ is an $(n-i)$-dimensional subspace.

	Fix $L\in\Grass_i$.
	Applying Theorem~\ref{thmvalonpoly} to the valuation $\phi(P)=\vol_i(P|L)$,
	we obtain that there exists a simple valuation $\lambda_{n-i}$ on the set of polyhedral cones with apex $0$ whose dimension is at most $n-i$,
	such that
	\begin{equation}\label{eqvalonpoly}
		\vol_i(P|L)=\sum_{F\in\calF_i(P)} \lambda_{n-i}(F,P)\vol_i(F)
	\end{equation}
	for every $P\in\calP(\RR^n)$.
	If $P$ is a polytope with centrally symmetric $(i+1)$-faces, using that $\lambda_{n-i}$ is simple,
	we conclude from the above that there exist subspaces $E_1,\ldots, E_m\in\Grass_{n-i}$ and positive numbers $\alpha_1,\ldots,\alpha_m$ such that
		$$\vol_i(P|L)=\sum_{j=1}^m \alpha_j\lambda_{n-i}(E_j).$$
	Plugging $i$-dimensional polytopes into \eqref{eqvalonpoly}, we find that
		$$\lambda_{n-i}(E)= \cos(E^\perp,L), \qquad E\in\Grass_{n-i}.$$
	Thus,
		$$\vol_i(P|L)=\sum_{j=1}^m \alpha_j \cos(E_j^\perp,L)$$
	and hence $P\in \calK(i)\subset \calG(i)$.
	This concludes the proof of the theorem.
\end{proof}

It is well-known that every generalized zonoid which is a polytope is in fact a zonoid, see e.g. \cite{schneider93}*{Corollary 3.3.6}.
This can be expressed as
	$$\calK(1)\cap \calP^n=\calG(1)\cap\calP^n.$$
The above proof also yields a more general version of this.

\begin{corollary}
	$$\calK(i)\cap \calP^n = \calG(i)\cap \calP^n, \qquad 0<i<n.$$
\end{corollary}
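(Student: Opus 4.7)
The inclusion $\calK(i)\cap\calP^n\subset \calG(i)\cap\calP^n$ is immediate from the definitions, since every positive Borel measure on $\Grass_i$ is in particular a signed Borel measure. The content of the corollary is therefore the reverse inclusion, and my plan is to obtain it essentially for free by re-reading the proof of Theorem~\ref{proppoly}.

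Let $P\in\calG(i)\cap\calP^n$. Because membership in $\calG(i)$ already presupposes central symmetry, $P$ is a centrally symmetric polytope, so Theorem~\ref{proppoly} applies and tells me that all $(i+1)$-faces of $P$ are centrally symmetric. I then invoke the converse direction in the proof of Theorem~\ref{proppoly}: that argument starts from a centrally symmetric polytope with centrally symmetric $(i+1)$-faces and produces, for every $L\in\Grass_i$, subspaces $E_1,\ldots,E_m\in\Grass_{n-i}$ and \emph{positive} numbers $\alpha_1,\ldots,\alpha_m$ with
$$\vol_i(P|L)=\sum_{j=1}^m \alpha_j\cos(E_j^\perp,L).$$
This representation exhibits $\vol_i(P|\,\cdot\,)$ as the cosine transform of the positive measure $\sum_{j}\alpha_j\delta_{E_j^\perp}$ on $\Grass_i$, so by the definition of $\calK(i)$ we conclude $P\in\calK(i)$, and hence $P\in\calK(i)\cap\calP^n$.

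In short, the proof of Theorem~\ref{proppoly} shows the two implications
$$P\in\calG(i)\cap\calP^n\ \Longrightarrow\ \text{$(i+1)$-faces of $P$ centrally symmetric}\ \Longrightarrow\ P\in\calK(i)\cap\calP^n,$$
which when combined with the trivial inclusion $\calK(i)\subset\calG(i)$ yields the equality. There is no real obstacle to surmount; the only thing to be careful about is to note that the argument given in the converse direction of the proof of Theorem~\ref{proppoly} actually produces a representing measure with \emph{positive} coefficients (the $\alpha_j$ arising from \eqref{eqvalonpoly} are positive), so the stronger conclusion $P\in\calK(i)$ is already contained in that proof even though the statement of Theorem~\ref{proppoly} only records $P\in\calG(i)$.
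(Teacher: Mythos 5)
Your argument is correct and is precisely the one the paper has in mind: the converse direction of the proof of Theorem~\ref{proppoly} already lands in $\calK(i)$ (with positive coefficients $\alpha_j$), and the forward direction shows $P\in\calG(i)\cap\calP^n$ has centrally symmetric $(i+1)$-faces, so the two composed with the trivial inclusion $\calK(i)\subset\calG(i)$ give the equality. The paper simply records this with the remark ``The above proof also yields a more general version of this,'' which is exactly what you have unpacked.
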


We put $\calP(i):=\calK(i)\cap \calP^n= \calG(i)\cap \calP^n$.

\begin{corollary}
	Let $n\geq 4$.
	Then
		$$\calP(1)=\cdots = \calP(n-3)\subsetneq \calP(n-2)\subsetneq \calP(n-1).$$
\end{corollary}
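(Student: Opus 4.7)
By Theorem~\ref{proppoly} together with the preceding corollary, $\calP(i)$ consists precisely of those centrally symmetric $n$-polytopes all of whose $(i+1)$-faces are centrally symmetric. The plan is to verify the three parts of the chain separately.

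For the strict inclusion $\calP(n-2)\subsetneq\calP(n-1)$, take the $n$-dimensional cross-polytope $C=\conv\{\pm e_1,\ldots,\pm e_n\}$. Since $C$ is centrally symmetric it lies in $\calP(n-1)$ by Theorem~\ref{proppoly}, whereas each facet of $C$ is an $(n-1)$-simplex and is not centrally symmetric for $n\geq 4$, so $C\notin\calP(n-2)$.

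For $\calP(n-3)\subsetneq\calP(n-2)$, let $T$ denote the $24$-cell in $\RR^4$ and set $P:=T\times[-1,1]^{n-4}$. Every face of $P$ is a product of a face of $T$ and a face of the cube. Since the facets of $T$ (octahedra) and all faces of the cube are centrally symmetric, every facet of $P$ is a product of two centrally symmetric polytopes and is therefore centrally symmetric, so $P\in\calP(n-2)$. However, a triangular $2$-face $\Delta$ of $T$ paired with the entire cube yields an $(n-2)$-face $\Delta\times[-1,1]^{n-4}$ of $P$---a prism over a triangle---which is not centrally symmetric, so $P\notin\calP(n-3)$.

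It remains to establish $\calP(1)=\cdots=\calP(n-3)$. The inclusion $\calP(1)\subset\calP(i)$ for $1\leq i\leq n-3$ is immediate from $\calG(1)\subset\calG(i)$. The reverse inclusion $\calP(i)\subset\calP(1)$ asserts, via Theorem~\ref{proppoly}, that a centrally symmetric $n$-polytope with centrally symmetric $(i+1)$-faces, where $i+1\leq n-2$, must already be a zonotope. I would derive this from the corresponding chain of equalities for the polytopal members of $\calK(i)$ established by Goodey and Weil~\cite{goodey_weil91}, which transfers to $\calG(i)$ through the preceding corollary.

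The hard part will be this last step, whose content is to propagate central symmetry from $(i+1)$-faces of codimension at least $2$ in $P$ down to $2$-faces, at which point McMullen's theorem characterizing zonotopes can be invoked. The two strict inclusions, by contrast, are settled entirely by the explicit polytope counterexamples above.
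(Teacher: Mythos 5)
Your proposal is correct in strategy and, for the strict inclusions, more explicit than the paper. The paper settles $\calP(n-3)\subsetneq\calP(n-2)$ by citing an example of McMullen (a polytope with centrally symmetric facets whose $(n-2)$-faces are not all centrally symmetric); you instead construct one concretely as $T\times[-1,1]^{n-4}$ with $T$ the $24$-cell, which is a nice, self-contained alternative. Your cross-polytope example for $\calP(n-2)\subsetneq\calP(n-1)$ is also fine and matches what one would expect (the paper does not even bother to name an example for this step since any centrally symmetric polytope with non-centrally-symmetric facets works).

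Where you are weakest is the chain of equalities $\calP(1)=\cdots=\calP(n-3)$. You correctly reduce the question, via Theorem~\ref{proppoly}, to showing that a centrally symmetric polytope with centrally symmetric $(i+1)$-faces, $i+1\le n-2$, must already have centrally symmetric $2$-faces. But you then defer to the Goodey--Weil paper and flag the step as ``the hard part'' without naming the result that makes it go through. The paper cites it directly: McMullen \cite{mcmullen70} proved that if a polytope $P\subset\RR^n$ has centrally symmetric $k$-faces for a \emph{single} value of $k$ with $2\le k\le n-2$, then all faces of $P$ of every dimension are centrally symmetric. This one theorem collapses the entire range $\calP(1),\ldots,\calP(n-3)$ at once, without any iterated propagation and without passing through Goodey--Weil (who themselves rely on McMullen's theorem). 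You should invoke this result by name rather than leaving it as an unverified citation to an intermediate source. The upward inclusions $\calP(1)\subset\calP(i)$ that you derive from $\calG(1)\subset\calG(i)$ are fine, though they also follow from the Shephard/McMullen fact (cited in the paper) that centrally symmetric $k$-faces for $2\le k\le n-1$ force centrally symmetric $(k+1)$-faces.
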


\begin{proof}
	McMullen \cite{mcmullen70} proved that if $P\in\calP(\RR^n)$ has centrally symmetric $k$-faces for some $k$, $2\leq k\leq n-2$,
	then the faces of $P$ of any dimension are centrally symmetric.
	In the same article McMullen gives an example of a polytope with centrally symmetric $(n-1)$-faces
	such that not every $(n-2)$-face is centrally symmetric.
	Furthermore, it was shown by Shephard \cite{shephard67} (see also McMullen \cite{mcmullen76})
	that if $P$ has centrally symmetric $k$-faces for some $k$, $2\leq k\leq n-1$, then $P$ has centrally symmetric $(k+1)$-faces.
\end{proof}

\begin{remark}
	Recall that $\calG(1)\subset \calG(i)\subset \calG(n-1)$ for $1\leq i\leq n-1$.
	It is an open problem whether the inclusion
		$$\calG(i) \subset\calG(j)$$
	holds true for general $0< i\leq j<n$.
\end{remark}

	\section{The Klain map}\label{secklain}
		In this section we show that the question whether a centrally symmetric convex body $K\in \calK(\RR^n)$ belongs to $\calG(i)$ or $\calK(i)$
can be decided using valuations from $\Val_i^+$.

\begin{theorem}\label{thmGi2}
	Let $K\in\calK(\RR^n)$ be centrally symmetric and $0<i<n$.
	Then $K\in\calG(i)$ if and only if there exists a constant $C\geq0$ such that
	\begin{equation}\label{eqbounded}
		|\phi(K)|\leq C\|\Klain_\phi\|
	\end{equation}
	for any $\phi\in\Val_i^+$.
\end{theorem}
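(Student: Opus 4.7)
The plan is to reduce the equivalence to an integral representation of $\phi(K)$ over $\Grass_i$: I will show that $K\in\calG(i)$ with associated measure $\mu_K$ if and only if
\[
	\phi(K)=\int_{\Grass_i}\Klain_\phi(E)\,d\mu_K(E)
\]
for every $\phi\in\Val_i^+$. The forward direction then delivers \eqref{eqbounded} with $C=|\mu_K|(\Grass_i)$, and the reverse direction follows by Hahn--Banach together with the Riesz representation theorem.

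For the forward implication, I would start by verifying the integral formula on the subspace $\calM\subset\Val_i^+$ generated by the mixed-volume valuations $\phi_A(L):=V(L[i],A[n-i])$ indexed by centrally symmetric $A\in\calK(\RR^n)$ (central symmetry of $A$ guarantees $\phi_A\in\Val_i^+$). A direct expansion via \eqref{eqprojmixed} shows that $\Klain_{\phi_A}(E)=\binom{n}{i}^{-1}\vol_{n-i}(A|E^\perp)$; on the other hand, Lemma \ref{lemproj} specialized to $L_1=\cdots=L_{n-i}=A$ gives
\[
\phi_A(K)=\binom{n}{i}^{-1}\int_{\Grass_i}\vol_{n-i}(A|E^\perp)\,d\mu_K(E),
\]
so the two sides match. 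Density of $\calM$ in $\Val_i^+$ is then a direct consequence of Alesker's irreducibility theorem (Theorem \ref{thmirred}): $\calM$ is a nonzero $GL(n)$-invariant subspace of $\Val_i^+$ (since $g\cdot\phi_A$ is a scalar multiple of $\phi_{gA}$ and central symmetry is preserved by $GL(n)$), so its closure must be all of $\Val_i^+$. Continuity of both sides of the integral identity on the Banach space $\Val_i^+$ (with $\phi\mapsto\int\Klain_\phi\,d\mu_K$ continuous because $\Klain$ is continuous and $\mu_K$ finite) then propagates the formula from $\calM$ to all of $\Val_i^+$, yielding $|\phi(K)|\leq|\mu_K|(\Grass_i)\,\|\Klain_\phi\|$.

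For the reverse implication, assume \eqref{eqbounded}. By injectivity of the Klain map (Theorem \ref{thmklain}) one well-defines a linear functional $\Lambda$ on $\Klain(\Val_i^+)\subset C(\Grass_i)$ by $\Lambda(\Klain_\phi):=\phi(K)$, and \eqref{eqbounded} says that $\Lambda$ is bounded in the supremum norm. Hahn--Banach extends $\Lambda$ continuously to $C(\Grass_i)$, and the Riesz representation theorem then produces a signed Borel measure $\mu$ on $\Grass_i$ with $\phi(K)=\int\Klain_\phi\,d\mu$ for every $\phi\in\Val_i^+$. Testing this identity against the projection valuation $\phi_F(L):=\vol_i(L|F)$, which lies in $\Val_i^+$ by \eqref{eqprojmixed} and has Klain function $\cos(F,\,\cdot\,)$, yields $\vol_i(K|F)=\int_{\Grass_i}\cos(F,E)\,d\mu(E)$ for every $F\in\Grass_i$---precisely the defining condition \eqref{eqintrep} for $K\in\calG(i)$ with $\mu_K=\mu$.

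The main obstacle is the first step of the forward direction: one has to identify a tractable dense subclass of $\Val_i^+$ on which $\phi(K)$ can be directly compared with $\int\Klain_\phi\,d\mu_K$. This comparison depends crucially on Lemma \ref{lemproj} together with the product decomposition of mixed volumes underlying \eqref{eqprojmixed}, and establishing density requires the full force of Alesker's irreducibility theorem. Once the integral formula $\phi(K)=\int\Klain_\phi\,d\mu_K$ is secured, the Hahn--Banach and Riesz arguments in the converse are essentially routine.
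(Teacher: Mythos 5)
Your proposal is correct and follows essentially the same approach as the paper: verify the identity $\phi(K)=\int_{\Grass_i}\Klain_\phi\,d\mu_K$ on the span of the mixed-volume valuations $V(\cdot[i],A[n-i])$ with $A$ centrally symmetric using Lemma \ref{lemproj}, use Alesker's irreducibility theorem to obtain density of that span in $\Val_i^+$, extend by continuity, and for the converse apply Hahn--Banach and Riesz to get $\mu_K$ and then test against the projection valuations $L\mapsto\vol_i(L|F)$. Your explicit GL$(n)$-invariance argument ($g\cdot\phi_A$ being a scalar multiple of $\phi_{gA}$) spells out a step the paper leaves implicit, but the route is the same.
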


\begin{proof}
	Fix $K\in\calG(i)$ and let $\phi\in\Val_i^+$ be the valuation defined by
		$$\phi(M)=V(M[i],L_1,\ldots,L_{n-i}), \qquad M\in\calK(\RR^n),$$
	where $L_1,\ldots,L_{n-i}$ are centrally symmetric convex bodies.
	It follows from \eqref{eqmixedvolproj} that $\Klain_\phi(E) = \binom{n}{i}^{-1} V(L_1,\ldots,L_{n-i}:\; E^\perp)$.
	Hence we can rewrite formula (\ref{eqproj}) as
	\begin{equation}\label{eqklainmu}
		\phi(K)=\int_{\Grass_{i}} \Klain_\phi(E)\; d\mu_K(E).
	\end{equation}
	By Alesker's irreducibility theorem, linear combinations of valuations of the form $\phi(M)=V(M[i],L_1,\ldots,L_{n-i})$
	lie dense in $\Val_i^+$, see Theorem~\ref{thmirred}.
	By the continuity of the Klain map, we conclude that (\ref{eqklainmu}) holds for any $\phi\in\Val_i^+$.
	Thus, there exists a constant $C\geq 0$ such that (\ref{eqbounded}) holds for any $\phi\in\Val_i^+$.

	Conversely, if (\ref{eqbounded}) holds, then the Hahn-Banach theorem and the Riesz representation theorem
	imply the existence of a signed Borel measure $\mu_K$ on $\Grass_i$ such that (\ref{eqklainmu}) holds for any $\phi\in\Val_i^+$.
	Fix $F\in \Grass_i$ and put $\phi(K)=\vol_i(K|F)$.
	Clearly, $\phi\in\Val_i^+$ and $\Klain_\phi(E)= \cos(E,F)$.
	Thus, (\ref{eqklainmu}) implies (\ref{eqintrep}), that is, $K\in\calG(i)$.
\end{proof}

We define a partial order on $\Val_i$ by $\phi\leq \psi$
if and only if $\phi$ and $\psi$ are real-valued and $\phi(K)\leq \psi(K)$ for every $K\in\calK(\RR^n)$.
Similarly, we define a partial order on $C(\Grass_i)$.
Since the Klain map is injective (Theorem~\ref{thmklain}), we can consider its inverse $\Klain^{-1}: \img(\Klain) \rightarrow \Val_i^+$.

\begin{corollary}\label{corinvklain}
	If $i\neq n-1$, then the inverse of the Klain map $\Klain^{-1}: \img(\Klain) \rightarrow \Val_i^+$ is not continuous and not monotone.
	However, $\Klain:\Val_{n-1}^+\rightarrow C(\Grass_{n-1})$ is an order isomorphism.
\end{corollary}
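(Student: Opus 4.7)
The plan is to deduce the three assertions in order, each resting on results already established in the paper.

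For the first statement (non-continuity of $\Klain^{-1}$ when $i \neq n-1$), I would invoke Theorem~\ref{proppoly}, which guarantees the existence of a centrally symmetric polytope $P$ with $P \notin \calG(i)$, and then Theorem~\ref{thmGi2}, which rules out any inequality of the form $|\phi(P)| \leq C\|\Klain_\phi\|$ on $\Val_i^+$. One may then choose $\phi_k \in \Val_i^+$ with $\|\Klain_{\phi_k}\| \to 0$ and $|\phi_k(P)| = 1$; rescaling $P$ to fit inside $B^n$ and using $i$-homogeneity keeps $\|\phi_k\|$ bounded away from zero. This exhibits Klain functions tending to zero whose preimages do not, refuting continuity of $\Klain^{-1}$ at the origin.

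For non-monotonicity I would argue by contradiction using the intrinsic volume $V_i \in \Val_i^+$, which satisfies $\Klain_{V_i}\equiv 1$. If $\Klain^{-1}$ were monotone, then for every real-valued $\phi \in \Val_i^+$ the Klain functions of $\|\Klain_\phi\|\,V_i \pm \phi$ would be non-negative, and monotonicity would force these valuations to be pointwise non-negative, yielding $|\phi(K)| \leq V_i(K)\,\|\Klain_\phi\|$ for every $K$. This is precisely the continuity estimate just excluded (complex-valued $\phi$ are handled by splitting into real and imaginary parts), so monotonicity must fail.

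For the order isomorphism when $i = n-1$, I would produce an explicit integral representation. Given $\phi \in \Val_{n-1}^+$, define the even continuous function $g(u) := \tfrac12\Klain_\phi(u^\perp)$ on $S^{n-1}$, and set
\[
	\psi(K) := \int_{S^{n-1}} g(u)\,dS_{n-1}(K,u).
\]
Then $\psi \in \Val_{n-1}^+$. Approximating a flat body $K \subset v^\perp$ by the prisms $K + \epsilon[-v,v]$ shows that $S_{n-1}(K,\cdot) = \vol_{n-1}(K)(\delta_v + \delta_{-v})$, whence $\psi(K) = \Klain_\phi(v^\perp)\,\vol_{n-1}(K)$, i.e.\ $\Klain_\psi = \Klain_\phi$. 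The injectivity of the Klain map (Theorem~\ref{thmklain}) then gives $\phi = \psi$, and since $S_{n-1}(K,\cdot)$ is a positive measure the non-negativity of $\Klain_\phi$ propagates to non-negativity of $\phi$; the reverse direction (order preservation of $\Klain$) is immediate from the definition of the Klain function.

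The main point requiring care is the last step: writing down $\psi$ is straightforward, but identifying $\Klain_\psi$ with $\Klain_\phi$ hinges on computing the $(n-1)$-st area measure of a degenerate (flat) convex body, and it is this explicit formula --- unavailable for smaller Grassmannians --- that makes the case $i = n-1$ qualitatively different. Once it is in place, all three parts of the corollary fall out cleanly.
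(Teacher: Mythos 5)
Your argument is correct and follows the paper's strategy essentially verbatim: non-continuity and non-monotonicity of $\Klain^{-1}$ both reduce, via Theorem~\ref{thmGi2} and Theorem~\ref{proppoly}, to the failure of the bound $|\phi(P)|\le C\|\Klain_\phi\|$ for a suitable centrally symmetric polytope $P$, with the monotone case handled by sandwiching $\phi$ between $\pm\|\Klain_\phi\|V_i$ exactly as in the paper, and the $i=n-1$ case resting on the operator $Tf(K)=\tfrac12\int_{S^{n-1}} f(u^\perp)\,dS_{n-1}(K,u)$. The one item left slightly implicit is surjectivity of $\Klain$ when $i=n-1$, which an \emph{order isomorphism} onto $C(\Grass_{n-1})$ requires: you prove $T(\Klain_\phi)=\phi$, i.e.\ $T\circ\Klain=\id$, but should also remark that your degenerate-body computation $\psi(K)=f(v^\perp)\vol_{n-1}(K)$ for $K\subset v^\perp$ is valid for an arbitrary $f\in C(\Grass_{n-1})$, giving $\Klain\circ T=\id$ and hence surjectivity (which is how the paper phrases it).
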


\begin{proof}
	Assume that the inverse of the Klain map is continuous.
	Then there exists a constant $C\geq 0$ such that $\|\phi\|\leq C\|\Klain_\phi\|$ for each $\phi\in\Val_i^+$.
	This clearly implies that for every convex body $K$ there exists a constant $C_K\geq 0$ such that
		$$|\phi(K)|\leq C_K\|\Klain_\phi\|$$
	for every $\phi\in\Val_i^+$.
	Using Theorem~\ref{thmGi2}, we obtain $\calG(i)=\calG(n-1)$.
	If $i\neq n-1$, this contradicts Theorem~\ref{proppoly}.

	Next we show that if the inverse of the Klain map was monotone, then it would also be continuous.
	In fact, in this case $-\|\Klain_\phi\| \leq \Klain_\phi\leq \|\Klain_\phi\|$ implies $|\phi(K)|\leq V_i(K)\|\Klain_\phi\|$,
	which gives continuity.
	Hence we conclude that if  $i\neq n-1$, then the inverse of the Klain map $\Klain^{-1}: \img(\Klain) \rightarrow \Val_i^+$ cannot be monotone.

	Suppose now $i=n-1$.
	We define a map $T:C(\Grass_{n-1})\rightarrow \Val_{n-1}^+$ by
		$$Tf(K)=\frac{1}{2}\int_{S^{n-1}} f(u^\perp)\; dS_{n-1}(K,u).$$
	Since
		$$(\Klain\circ T)(f)=f$$
	we conclude that $\Klain:\Val_{n-1}^+\rightarrow C(\Grass_{n-1})$ is surjective and $T=\Klain^{-1}$.
	Since the area measure $S_{n-1}(K,\;\cdot\;)$ of a convex body $K$ is a positive measure,
	we conclude that $\Klain$ is an order isomorphism.
\end{proof}

Corollary~\ref{corinvklain} is in sharp contrast with the following result due to Alesker and Bernstein.
Here $(\Val_i^+)^\infty$ and $C^\infty(\Grass_i)$ are equipped with their natural $C^\infty$-topologies
and all topological concepts refer to these topologies.

\begin{theorem}[Alesker and Bernstein \cite{alesker_bernstein04}]
	The Klain map $\Klain: (\Val_i^+)^\infty\rightarrow C^\infty(\Grass_i)$, $0<i<n$, is an isomorphism of topological vector spaces onto its image.
	In particular, the image of the Klain map is closed.
\end{theorem}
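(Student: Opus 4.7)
The idea is to factor the problem through the cosine transform $C_i$: construct a natural integral operator $T$ so that $\Klain \circ T = C_i$, and then exploit the spectral analysis of $C_i$. The key inputs will be Klain's injectivity (Theorem~\ref{thmklain}), Alesker's irreducibility theorem (Theorem~\ref{thmirred}), and the open mapping theorem. Define $T : C^\infty(\Grass_i) \to (\Val_i^+)^\infty$ by
$$
Tf(K) = \int_{\Grass_i} \vol_i(K | E)\, f(E)\, dE.
$$
Since $K \mapsto \vol_i(K|E)$ is a mixed volume (via~\eqref{eqprojmixed}), integration against the smooth kernel $f$ yields a valuation in $(\Val_i^+)^\infty$, and $T$ is continuous. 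For any $K \subset F$ with $\vol_i(K) > 0$, the definition~\eqref{euclid_eqcosdef} of $\cos(F,E)$ gives
$$
Tf(K) = \vol_i(K) \int_{\Grass_i} \cos(F, E)\, f(E)\, dE = \vol_i(K)\, C_i f(F),
$$
hence $\Klain(Tf) = C_i f$, i.e.\ $\Klain \circ T = C_i$. In particular, $\img(C_i) \subseteq \img(\Klain)$.

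Next, I would establish the reverse inclusion $\img(\Klain) \subseteq \img(C_i)$ together with closedness of the common image. Both $(\Val_i^+)^\infty$ and $C^\infty(\Grass_i)$ are admissible smooth Fr\'{e}chet representations of $SO(n)$, and the Klain map is $SO(n)$-equivariant, so it preserves $SO(n)$-isotypic components. The technical core of Alesker--Bernstein is a complete description of the image of $C_i$ via harmonic analysis on the Grassmannian: one determines on which $SO(n)$-irreducibles in $C^\infty(\Grass_i)$ the cosine transform acts nontrivially, and shows that the corresponding isotypic sum is a closed subspace of $C^\infty(\Grass_i)$. Matching this list of surviving types with the $SO(n)$-type decomposition of $(\Val_i^+)^\infty$---where Alesker's irreducibility theorem is invoked to rule out any extra $SO(n)$-type on the valuation side---yields $\img(\Klain) = \img(C_i)$.

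With the image identified as a closed subspace of $C^\infty(\Grass_i)$, the topological statement falls out of the open mapping theorem: the Klain map is injective by Theorem~\ref{thmklain} and continuous, hence a continuous linear bijection between the Fr\'{e}chet spaces $(\Val_i^+)^\infty$ and $\img(\Klain)$, so the open mapping theorem forces its inverse to be continuous and yields the desired topological isomorphism onto its image.

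The hardest step is the spectral analysis of $C_i$ in the second paragraph: one must describe how $C_i$ acts on each irreducible $SO(n)$-type occurring in $C^\infty(\Grass_i)$, identify precisely the subset of types on which it acts injectively, and verify that the sum of those isotypic components forms a closed subspace of $C^\infty(\Grass_i)$. This is the substantive content of the main technical theorem of \cite{alesker_bernstein04}; without it, one only obtains density and injectivity of the smooth Klain map, not the decisive closed-range property.
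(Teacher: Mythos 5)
The paper does not prove this theorem; it is stated as a cited background result from \cite{alesker_bernstein04} and used as a black box, so there is no in-paper argument to compare against. Your sketch is a faithful reconstruction of the actual Alesker--Bernstein strategy: factoring through $T=A_i$, observing $\Klain\circ T = C_i$, reducing the closed-range and isomorphism statements to a range characterization of the cosine transform on $C^\infty(\Grass_i)$, and then invoking the open mapping theorem for Fr\'echet spaces. You also correctly flag the genuine difficulty, namely the determination of the $SO(n)$-isotypic components on which $C_i$ acts injectively and the verification that their span is closed; that is indeed the main technical content of \cite{alesker_bernstein04}.

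One simplification worth noting: to get $\img(\Klain)\subseteq\img(C_i)$ you do not need to redo the $SO(n)$-type matching on the valuation side. As recorded later in the paper, $A_i\colon C^\infty(\Grass_i)\rightarrow(\Val_i^+)^\infty$ is surjective (this is a direct consequence of Theorem~\ref{thmirred}, since the image of $A_i$ is a nonzero $GL(n)$-invariant subspace of the irreducible Casselman--Wallach module $(\Val_i^+)^\infty$), and combining this with $\Klain\circ A_i=C_i$ immediately gives $\img(\Klain)=\img(C_i)$. With that shortcut, the only remaining deep input is the closed-range property of $C_i$ on $C^\infty(\Grass_i)$, exactly as you identify. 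Also note that, strictly speaking, you should check that $T$ is well defined, i.e.\ that $Tf$ lands in $(\Val_i^+)^\infty$ and not merely in $\Val_i^+$; this is true because integration of the family $g\mapsto g\cdot\psi$ against a smooth density produces a smooth vector for the $GL(n)$-action, but it deserves a sentence.
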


We can use Corollary~\ref{corinvklain} to deduce that Alesker's Fourier transform $\FF:\Val^\infty \rightarrow \Val^\infty$
cannot be extended to a continuous map $\tilde\FF:\Val\rightarrow \Val$.
Indeed, assume that such an extension $\tilde\FF$ existed.
If $\phi\in(\Val_i^+)^\infty$, then it is known that $\FF\phi\in(\Val_{n-i}^+)^\infty$ and
\begin{equation}\label{deffourier}
	\Klain_{\FF \phi}(E)=\Klain_\phi(E^\perp),\qquad E\in\Grass_{n-i},
\end{equation}
see \cite{alesker03} (there the Fourier transform was denoted by $\DD$ and called duality transform).
In particular, we see that $\FF\circ \FF(\phi)=\phi$ for each $\phi\in(\Val^+)^\infty$.
By continuity, we obtain $\tilde\FF\circ\tilde\FF(\phi)=\phi$ for every $\phi\in\Val^+$,
which shows that $\tilde\FF:\Val^+\rightarrow\Val^+$ is a linear isomorphism of Banach spaces.
Extending \eqref{deffourier} by continuity, we arrive at the following commutative diagram:

\begin{center}
\begin{tikzpicture}[description/.style={fill=white,inner sep=2pt}]
    \matrix (m) [matrix of math nodes, row sep=3em,
    column sep=2.5em, text height=1.5ex, text depth=0.25ex]
    { \Val_{n-1}^+ &  \Val_{1}^+ \\
         C(\Grass_{n-1}) &  C(\Grass_{1})  \\ };
    \path[->,font=\scriptsize]
    (m-1-1) edge node[auto] {$ \tilde \FF$} (m-1-2)
    (m-1-1) edge node[auto] {$ \Klain $} (m-2-1)
    (m-2-1) edge node[auto] {$\perp$} (m-2-2)
	 (m-1-2) edge node[auto] {$\Klain$} (m-2-2);
\end{tikzpicture}
\end{center}

Since $\Klain: \Val_{n-1}^+\rightarrow C(\Grass_{n-1})$ is a linear isomorphism by Corollary~\ref{corinvklain},
we conclude that $\Klain: \Val_1^+\rightarrow C(\Grass_1)$ is a linear isomorphism as well.
This, however, contradicts Corollary~\ref{corinvklain}.

It was shown by Alesker \cite{alesker03} that the map $A_i:C^\infty(\Grass_i)\rightarrow (\Val_i^+)^\infty$ given by
	$$A_i(f)(K)=\int_{\Grass_i} \vol_i(K|E) f(E)\; dE$$
is surjective.
Our next result shows once more that the assumption of smoothness is crucial.
If we consider only continuous valuations, the corresponding statement is false,
even if we extend the domain of $A_i$ to $M(\Grass_i)$, the space of signed Borel measures on $\Grass_i$,
	$$A_i(\mu)(K)=\int_{\Grass_i} \vol_i(K|E) \; d\mu(E).$$
The Klain function of $A_i(\mu)\in\Val_i^+$ coincides with the cosine transform of $\mu$
\begin{equation}\label{eqklaincos}
	\Klain( A_i(\mu))=C_i(\mu),
\end{equation}
see e.g.\ \cite{alesker03}.

\begin{corollary}
	The map $A_i: M(\Grass_i)\rightarrow \Val_i^+$, $1<i<n$, is not surjective.
\end{corollary}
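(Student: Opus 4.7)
The plan is to argue by contradiction, combining the Open Mapping Theorem with the Lipschitz regularity of the cosine kernel. Suppose $A_i\colon M(\Grass_i)\to\Val_i^+$ is surjective. Since $A_i$ is a continuous linear map between Banach spaces, the Open Mapping Theorem produces a constant $C>0$ such that every $\phi\in\Val_i^+$ admits a preimage $\mu$ with $A_i(\mu)=\phi$ and $\|\mu\|\le C\|\phi\|_{\Val_i^+}$. The cosine kernel $(E,F)\mapsto\cos(E,F)$ is smooth on the compact Grassmannian; its Lipschitz constant in the first variable is bounded uniformly in $F$ by some $L<\infty$. Combined with the identity $\Klain_\phi=C_i(\mu)$ from \eqref{eqklaincos}, this yields
\[
\|\Klain_\phi\|_{\operatorname{Lip}(\Grass_i)}\;=\;\|C_i(\mu)\|_{\operatorname{Lip}(\Grass_i)}\;\le\;L\|\mu\|\;\le\;LC\|\phi\|_{\Val_i^+},
\]
forcing the image of the Klain map to lie in $\operatorname{Lip}(\Grass_i)$ with a uniform bound.

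For $i=n-1$ the contradiction is immediate: by Corollary~\ref{corinvklain} the Klain map is a Banach-space isomorphism $\Val_{n-1}^+\to C(\Grass_{n-1})$, so its image is all of $C(\Grass_{n-1})$; but not every continuous function on $\Grass_{n-1}$ is Lipschitz.

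For $1<i<n-1$ the task is to exhibit a $\phi\in\Val_i^+$ with non-Lipschitz Klain function, contradicting the displayed inclusion. By Theorem~\ref{proppoly} there exists a centrally symmetric polytope $P\notin\calG(i)$, and Theorem~\ref{thmGi2} produces a sequence $\phi_k\in\Val_i^+$ with $\phi_k(P)=1$ and $\|\Klain_{\phi_k}\|_\infty\to 0$. Writing $\phi_k=A_i(\mu_k)$ with $\|\mu_k\|\le C\|\phi_k\|$, one has $\int_{\Grass_i}\vol_i(P|E)\,d\mu_k(E)=1$ while $\|C_i(\mu_k)\|_\infty\to 0$. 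After normalizing so that the total variations of the representing measures are uniformly bounded, a Banach--Alaoglu extraction of a weak-$*$ subsequential limit $\mu_\infty$ produces---by weak-$*$ continuity of the pairings with the continuous kernels $\vol_i(P|\cdot)$ and $\cos(E,\cdot)$---an element $\mu_\infty\in\ker(C_i)=\ker(A_i)$ whose pairing with $\vol_i(P|\cdot)$ equals $1$, so $A_i(\mu_\infty)(P)=1$, contradicting $A_i(\mu_\infty)=0$. The main obstacle is precisely the normalization step: one needs the efficiency ratios $|\phi_k(P)|/\|\phi_k\|_{\Val_i^+}$ to remain bounded away from zero along the sequence, which requires selecting representatives from a bounded convex region of $\Val_i^+$ that still intersects $\{\phi:\|\Klain_\phi\|_\infty\le\varepsilon\}$ for arbitrarily small $\varepsilon$.
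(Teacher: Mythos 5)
Your proposal takes a genuinely different, much more functional-analytic route than the paper. The paper's proof is two lines: by Theorem~\ref{proppoly} choose a centrally symmetric polytope $L\notin\calG(n-i)$ (possible since $n-i\neq n-1$ when $i>1$); if the valuation $\phi(K)=V(K[i],L[n-i])\in\Val_i^+$ were $A_i(\mu)$, then by \eqref{eqprojmixed}, \eqref{eqklaincos} and \eqref{euclid_eqcosperp} one would have $\vol_{n-i}(L|\,\cdot\,)=\binom{n}{i}C_{n-i}\mu^\perp$, i.e.\ $L\in\calG(n-i)$, a contradiction. This handles all $1<i<n$ uniformly with a single explicit witness.

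Your argument is correct and self-contained for the case $i=n-1$: the Open Mapping Theorem bound $\|\mu\|\le C\|\phi\|_{\Val}$, the Lipschitz regularity of $E\mapsto\cos(E,F)$ (note the kernel is not smooth --- already on $\Grass_1(\RR^2)$ it is $|\cos\theta|$, which has a corner --- but it is Lipschitz, which is all you use), and the fact that $\Klain\colon\Val_{n-1}^+\to C(\Grass_{n-1})$ is an isomorphism together yield the false estimate $\|f\|_{\mathrm{Lip}}\lesssim\|f\|_\infty$ for all $f\in C(\Grass_{n-1})$.

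For $1<i<n-1$, however, the argument has a genuine gap, which you correctly identify at the end. From $P\notin\calG(i)$ and Theorem~\ref{thmGi2} you get $\phi_k$ with $\phi_k(P)=1$ and $\|\Klain_{\phi_k}\|_\infty\to0$, and from the OMT you get $\phi_k=A_i(\mu_k)$ with $\|\mu_k\|\le C\|\phi_k\|_{\Val}$. But nothing prevents $\|\phi_k\|_{\Val}\to\infty$. If you renormalize so that $\|\mu_k\|$ is bounded (as you propose), the pairing of $\mu_k$ with $\vol_i(P|\,\cdot\,)$ is rescaled by $1/\|\mu_k\|$ and may tend to $0$; the weak-$*$ limit $\mu_\infty$ then satisfies $C_i\mu_\infty=0$ \emph{and} $\int\vol_i(P|\,\cdot\,)\,d\mu_\infty=0$, which is no contradiction at all (indeed $\mu_\infty=0$ is consistent). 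The additional input you would need --- that the sequence can be chosen with $|\phi_k(P)|/\|\phi_k\|_{\Val}$ bounded away from $0$ --- is not provided by Theorem~\ref{thmGi2} and does not obviously follow from anything in the paper. So the case $2\le i\le n-2$ is not established by your argument. The paper's direct construction avoids this entirely and is the intended proof.
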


\begin{proof}
	By Theorem~\ref{proppoly} we can choose a centrally symmetric convex body $L\in \calK(\RR^n)$ which is not contained in $\calG(n-i)$.
	Define a valuation $\phi\in\Val_{i}^+$ by
		$$\phi(K)=V(K[i],L[n-i]).$$
	If $\phi=A_i(\mu)$ for some signed measure $\mu$, using \eqref{eqprojmixed}, \eqref{eqklaincos}, and \eqref{euclid_eqcosperp}, we obtain
		$$\vol_{n-i}(L|E)=\binom{n}{i} \Klain_\phi(E^\perp)=\binom{n}{i}(C_{n-i}\mu^\perp)(E),$$
	for each $E\in \Grass_{n-i}$. This contradicts our choice of $L$.
\end{proof}

Another consequence of Theorem~\ref{thmGi2} is related to the problem of describing the subspace of angular valuations. Recall that $\phi\in\Val_i^+$ is called \emph{angular} if for every polytope $P$
$$\phi(P)=\sum_{F\in\calF_i(P)} \Klain_\phi(\bar F) \gamma(F,P)\vol_i(F).$$
Here $\gamma(F,P)$ denotes the normalized exterior angle of $P$ at its face $F$
and $\bar F$ the unique translate of the affine hull of the face $F$ which contains the origin.
Important examples of angular valuations are the classical intrinsic volumes and the Hermitian intrinsic volumes.
Moreover, every constant coefficient valuation is known to be angular, see \cite{bernig_etal}*{Lemma 2.29}.
Examples of valuations which are not angular are harder to come by and not very much seems to be known concerning the class of all angular valuations.
As a consequence of Theorem~\ref{thmGi2} we obtain that there exists a non-trivial obstruction to the angularity of valuations.
In particular, we see that the smoothness class of $\phi$ is not important.
For a similar phenomenon see \cite{alesker12}.

\begin{corollary}
	Every $\phi\in\Val_{n-1}^+$ is angular.
	If $0<i<n-1$, then there exists a $\phi\in(\Val_i^+)^\infty$ which is not angular. 
\end{corollary}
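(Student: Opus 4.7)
The plan is to prove the two assertions separately. For the top-dimensional case I would use the explicit right inverse of the Klain map constructed in Corollary~\ref{corinvklain}; for $0<i<n-1$ I would argue by contradiction, exploiting the characterization of $\calG(i)$ from Theorem~\ref{thmGi2} together with the existence result from Theorem~\ref{proppoly}.

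For $\phi\in\Val_{n-1}^+$, the key observation is that the operator $T$ from Corollary~\ref{corinvklain} is a two-sided inverse of the Klain map, so
$$\phi(K)=\frac{1}{2}\int_{S^{n-1}}\Klain_\phi(u^\perp)\,dS_{n-1}(K,u)$$
for every $K\in\calK(\RR^n)$. Specializing to a polytope $P$, the area measure $S_{n-1}(P,\,\cdot\,)$ reduces by \eqref{eqsurfpoly} to $\sum_{F\in\calF_{n-1}(P)}\vol_{n-1}(F)\,\delta_{u_F}$, where $u_F$ is the outward unit normal of the facet $F$. Since $\gamma(F,P)=\tfrac12$ and $\bar F=u_F^\perp$ at any facet, the integral formula for $\phi(P)$ rearranges directly into the angular identity. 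This part is essentially bookkeeping, and I do not expect any obstacle.

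For the second assertion, I would argue by contradiction. Suppose $0<i<n-1$ and that every smooth valuation in $(\Val_i^+)^\infty$ is angular. Fix an arbitrary centrally symmetric polytope $P\in\calK(\RR^n)$. Both sides of the angular identity at $P$,
$$\phi\mapsto\phi(P)\qquad\text{and}\qquad\phi\mapsto\sum_{F\in\calF_i(P)}\Klain_\phi(\bar F)\gamma(F,P)\vol_i(F),$$
are continuous linear functionals on $\Val_i^+$: the first by definition of the Banach norm on $\Val$, and the second because the Klain map is continuous into $C(\Grass_i)$ and the expression is a finite linear combination of point evaluations. By assumption, they agree on the dense subspace $(\Val_i^+)^\infty$, hence on all of $\Val_i^+$. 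Consequently,
$$|\phi(K)|=|\phi(P)|\leq\Bigl(\sum_{F\in\calF_i(P)}\gamma(F,P)\vol_i(F)\Bigr)\|\Klain_\phi\|\qquad\text{for every }\phi\in\Val_i^+,$$
so Theorem~\ref{thmGi2} forces $P\in\calG(i)$. Since $P$ was an arbitrary centrally symmetric polytope, every such $P$ would lie in $\calG(i)$, contradicting Theorem~\ref{proppoly} which guarantees the existence of a centrally symmetric polytope with a non-centrally-symmetric $(i+1)$-face whenever $0<i<n-1$.

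The main point that requires care is the continuity-and-density argument transferring the angular identity from smooth valuations to arbitrary continuous ones; once that is in place, the two heavy lifters are Theorem~\ref{thmGi2} and Theorem~\ref{proppoly}, both already available. I do not foresee any genuine obstacle beyond this, since the argument is a clean reduction to the characterization of $\calG(i)$.
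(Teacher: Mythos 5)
Your proposal is correct and follows essentially the same route as the paper's proof: the first assertion is derived from the explicit formula $\phi(P)=\tfrac12\int_{S^{n-1}}\Klain_\phi(u^\perp)\,dS_{n-1}(P,u)$ established in Corollary~\ref{corinvklain}, and the second assertion is a contradiction argument combining Theorem~\ref{thmGi2}, Theorem~\ref{proppoly}, and density of smooth valuations. The only cosmetic difference is that you transfer the full angular \emph{identity} from $(\Val_i^+)^\infty$ to $\Val_i^+$ by continuity and density before extracting the bound, whereas the paper transfers the \emph{inequality} $|\phi(P)|\le V_i(P)\,\|\Klain_\phi\|$ directly; both versions of the density step are valid and yield the same contradiction.
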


\begin{proof}
Suppose $\phi\in\Val_{n-1}^+$. From the proof of Corollary~\ref{corinvklain} we immediately obtain 
\begin{align*}
\phi(P)	&=\frac{1}{2} \int_{S^{n-1}} \Klain_\phi(u^\perp)\; dS_{n-1}(P,u)\\
		&=\sum_{F\in\calF_{n-1}(P)} \Klain_\phi(\bar F) \gamma(F,P)\vol_{n-1}(F).
\end{align*}
Thus, every $\phi\in\Val_{n-1}^+$ is angular. 

Now fix $0<i<n-1$ and let $\phi\in(\Val_i^+)^\infty$. If $\phi$ is angular, then
\begin{align*}
|\phi(P)|	&\leq \| \Klain_\phi\| \sum_{F\in\calF_i(P)} \gamma(F,P)\vol_i(F)\\
			&= \| \Klain_\phi\| V_i(P).
\end{align*}
If we assume that every $\phi\in(\Val_i^+)^\infty$ is angular,
then the above inequality implies that \eqref{eqbounded} holds for every $\phi\in(\Val_i^+)^\infty$.
Since smooth valuations lie dense and the Klain map is continuous, \eqref{eqbounded} would in fact hold for every $\phi\in\Val_i^+$.
By Theorem~\ref{thmGi2}, we obtain that every centrally symmetric polytope is contained in $\calG(i)$.
This contradicts Theorem~\ref{proppoly}.
\end{proof}

Before we proceed, let us review a representation result for positive, linear functionals due to Choquet \cite{choquet69}*{Theorem 34.6}.
Let $X$ be a locally compact, Hausdorff space and $C(X,\RR)$ the vector space of continuous, real-valued functions on $X$ with the usual ordering:
$f\leq g$ if and only if $f(x)\leq g(x)$ for all $x\in X$.
For each subspace $H\subset C(X,\RR)$ we denote by $H^+$ those $f\in H$ with $f\geq0$.
Let $f,g \in C(X,\RR)^+$.
We say that $f$ dominates $g$ if for any $\varepsilon>0$ there is an $h\in C_c(X,\RR)$ such that $g\leq \varepsilon f+h$.
A subspace $H\subset C(X,\RR)$ is called \emph{adapted} if the following holds:
\begin{itemize}
	\item[(i)] $H=H^+- H^+$;
	\item[(ii)] for all $x\in X$ there is an $f\in H^+$ such that $f(x)>0$;
	\item[(iii)] every $g\in H^+$ is dominated by some $f\in H^+$.
\end{itemize}
Observe that if $X$ is compact, then (iii) is automatically satisfied.

\begin{theorem}[Choquet \cite{choquet69}*{Theorem 34.6}]\label{thmchoquet}
	Let $X$ be a locally compact Hausdorff space and $H\subset C(X,\RR)$ an adapted subspace of continuous functions.
	Suppose $\Lambda: H\rightarrow \RR$ is a positive, linear functional, i.e.\ $\Lambda (f)\geq 0$ whenever $f\in H^+$.  
	Then there exists a positive Radon measure $\mu$ such that every $f\in H$ is $\mu$-integrable and
		$$\Lambda(f)=\int_X f\; d\mu$$
	whenever $f\in H$.
\end{theorem}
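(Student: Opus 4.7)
The plan is to reduce the theorem to the Riesz--Markov--Kakutani representation theorem by extending $\Lambda$ to a positive linear functional on the larger space $V := H + C_c(X,\RR)$, applying Riesz to its restriction to $C_c(X,\RR)$, and then using the domination property (iii) to identify the resulting integral with $\Lambda$ on all of $H$.

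For the extension, I would first observe that by (ii) and local compactness every $g \in C_c(X,\RR)^+$ is bounded above by a multiple of some element of $H^+$: for each $x\in\supp g$ pick $f_x \in H^+$ with $f_x(x) > 0$, extract a finite subcover of $\supp g$ from the open sets $\{f_x > 0\}$, and sum the corresponding $f_{x_i}$ to obtain (using (i)) an element of $H^+$ strictly positive on $\supp g$. This makes the sublinear functional
$$
p(u) := \inf\{\Lambda(f) : f \in H^+,\ u \leq f\}
$$
finite everywhere on $V$, since writing an arbitrary $h \in H$ as $h = h^+ - h^-$ via (i) gives $u = h + g \leq h^+ + g^+ \leq h^+ + Cf'$ for a suitable $f' \in H^+$. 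Positivity of $\Lambda$ on $H^+$ implies $\Lambda(h) \leq p(h)$ on $H$, so Hahn--Banach produces a linear extension $\tilde\Lambda\colon V \to \RR$ with $\tilde\Lambda \leq p$. The inequality $p(-u) \leq 0$ for $u \geq 0$ then forces $\tilde\Lambda$ to be positive on $V$.

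Restricting $\tilde\Lambda$ to $C_c(X,\RR)$ and invoking Riesz--Markov--Kakutani yields a positive Radon measure $\mu$ on $X$ with $\tilde\Lambda(g) = \int g\, d\mu$ for every $g \in C_c(X,\RR)$. By (i) it suffices to verify $\Lambda(f) = \int f\, d\mu$ for $f \in H^+$. The upper bound $\int f\, d\mu \leq \Lambda(f)$ is immediate from the Radon description of the integral: for any $g \in C_c(X,\RR)^+$ with $g \leq f$ one has $\int g\, d\mu = \tilde\Lambda(g) \leq \tilde\Lambda(f) = \Lambda(f)$, and $\int f\, d\mu$ is the supremum of such quantities, so in particular $f$ is $\mu$-integrable.

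The reverse inequality is where (iii) is essential, and I expect this identification step to be the main obstacle. Given $\varepsilon > 0$, choose $g \in H^+$ dominating $f$ and $h \in C_c(X,\RR)$ with $f \leq \varepsilon g + h$. The truncation $\tilde h := \min(h,f)$ lies in $C_c(X,\RR)$ (it vanishes outside $\supp h$ since $f \geq 0$) and still satisfies $\tilde h \geq f - \varepsilon g$ pointwise. Positivity of $\tilde\Lambda$ applied to $\tilde h - f + \varepsilon g \geq 0$ gives
$$
\int \tilde h\, d\mu = \tilde\Lambda(\tilde h) \geq \Lambda(f) - \varepsilon\Lambda(g),
$$
and combining with $\tilde h \leq f$ yields $\int f\, d\mu \geq \Lambda(f) - \varepsilon\Lambda(g)$. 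Letting $\varepsilon \to 0$ finishes the proof. The truncation $\min(h,f)$ is what allows (iii) to deliver a usable estimate without circularly assuming the $\mu$-integrability of $f$; the Hahn--Banach extension is highly non-canonical, and (iii) is the only bridge between $\Lambda$ on $H$ and the Radon integral once one leaves $C_c(X,\RR)$.
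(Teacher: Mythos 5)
The paper does not prove this statement; it is quoted verbatim from Choquet \cite{choquet69}*{Theorem 34.6} and used as a black box in the proof of Theorem~\ref{thmKi}, so there is no proof in the paper to compare against. Your argument is correct and is the standard Hahn--Banach/Riesz route to this theorem. The main points check out: by (ii) and a compactness argument on $\supp g$, every $g \in C_c(X,\RR)^+$ sits below a multiple of some element of $H^+$, which makes $p(u) = \inf\{\Lambda(f) : f \in H^+,\ u \leq f\}$ finite (indeed nonnegative) and sublinear on $V = H + C_c(X,\RR)$; positivity of the Hahn--Banach extension $\tilde\Lambda$ follows because $u \geq 0$ gives $-u \leq 0 \in H^+$, hence $p(-u) \leq \Lambda(0) = 0$; and the upper bound $\int_X f\,d\mu \leq \Lambda(f)$, together with integrability, follows from inner approximation of a nonnegative lower semicontinuous function by $C_c^+$ functions under a Radon measure. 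Your truncation $\tilde h = \min(h,f)$ is exactly the right device for the reverse inequality: it lies in $C_c(X,\RR)$ because it vanishes off $\supp h$ (here $f \geq 0$ matters), it is wedged between $f - \varepsilon g$ and $f$, and therefore
$$
\int_X f\,d\mu \;\geq\; \int_X \tilde h\,d\mu \;=\; \tilde\Lambda(\tilde h) \;\geq\; \tilde\Lambda(f - \varepsilon g) \;=\; \Lambda(f) - \varepsilon\,\Lambda(g),
$$
with $g \in H^+$ fixed and $\varepsilon \to 0$. Without the truncation there is indeed no way to compare $\tilde\Lambda(h)$ with $\int_X f\,d\mu$, since $h$ need not lie below $f$; you are right that this is the crux. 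One cosmetic remark: property (i) does not assert that $h^{\pm}$ are the pointwise positive and negative parts of $h$, only that $h = f_1 - f_2$ with $f_1, f_2 \in H^+$; what your finiteness argument actually uses is $h \leq f_1 \in H^+$, which suffices.
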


\begin{theorem}\label{thmKi}
	Let $K\in\calK(\RR^n)$ be centrally symmetric and $0<i<n$.
	Then $K\in\calK(i)$ if and only if
	\begin{equation}\label{eqpos}
		0\leq\Klain_\phi \ \Longrightarrow\ 0\leq \phi(K),
	\end{equation}
	for every $\phi\in\Val_i^+$.
\end{theorem}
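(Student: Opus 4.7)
The forward implication follows essentially from the proof of Theorem \ref{thmGi2}. The plan is to observe that if $K\in\calK(i)$ with associated positive measure $\mu_K$, then in particular $K\in\calG(i)$, and the argument establishing \eqref{eqklainmu} produces the integral representation
$$\phi(K)=\int_{\Grass_i}\Klain_\phi(E)\,d\mu_K(E)$$
for every $\phi\in\Val_i^+$. Positivity of $\mu_K$ then immediately yields \eqref{eqpos}.

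For the converse, my plan is to apply Choquet's theorem (Theorem \ref{thmchoquet}). I would denote by $\Val_i^+(\RR)$ the real subspace of $\Val_i^+$ and set $H:=\Klain(\Val_i^+(\RR))\subset C(\Grass_i,\RR)$. Injectivity of the Klain map (Theorem \ref{thmklain}) allows the definition of a linear functional $\Lambda:H\to\RR$ by $\Lambda(\Klain_\phi)=\phi(K)$, and the hypothesis \eqref{eqpos} is precisely the statement that $\Lambda$ is positive on $H^+$.

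The main work is then to verify that $H$ is adapted. Condition (iii) is automatic since $\Grass_i$ is compact, and condition (ii) is witnessed by the $i$-th intrinsic volume $V_i$, whose Klain function is the constant $1$. For condition (i), I would observe that since $\Klain_\phi$ is continuous on the compact Grassmannian it is bounded, so there exists $C\geq 0$ with $\Klain_\phi+C\geq 0$; writing $\Klain_\phi+C=\Klain_{\phi+CV_i}\in H^+$ and $C=\Klain_{CV_i}\in H^+$ expresses $\Klain_\phi=(\Klain_\phi+C)-C$ as a difference of elements of $H^+$, as required.

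Choquet's theorem then supplies a positive Radon measure $\mu_K$ on $\Grass_i$ such that $\Lambda(f)=\int_{\Grass_i} f\,d\mu_K$ for every $f\in H$. To conclude, I would specialize to the valuation $\phi_E(M)=\vol_i(M|E)$ for each $E\in\Grass_i$, whose Klain function is $F\mapsto\cos(F,E)$, obtaining
$$\vol_i(K|E)=\phi_E(K)=\Lambda(\Klain_{\phi_E})=\int_{\Grass_i}\cos(F,E)\,d\mu_K(F),$$
so $K\in\calK(i)$. I expect the verification of condition (i) to be the only delicate point, as it depends essentially on the availability of the strictly positive Klain function $\Klain_{V_i}\equiv 1$ in $H$ to absorb the negative part of an arbitrary Klain function.
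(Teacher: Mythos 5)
Your proof is correct and follows essentially the same route as the paper: the forward direction reuses the integral representation from the proof of Theorem~\ref{thmGi2} with a positive measure, and the converse applies Choquet's Theorem~\ref{thmchoquet} to the image of the Klain map, verifying adaptedness via the constant Klain function of $V_i$ (your decomposition $\Klain_\phi=(\Klain_\phi+C)-C$ is the same absorbing trick as the paper's $\Klain_\phi=\|\Klain_\phi\|-(\|\Klain_\phi\|-\Klain_\phi)$), and then specializes to $\phi_E(M)=\vol_i(M|E)$ to recover \eqref{eqintrep}. You are in fact slightly more careful than the paper in making explicit the restriction to real-valued valuations, which Choquet's theorem requires.
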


\begin{proof}
	Suppose $K\in\calK(i)$.
	Since $\calK(i)\subset\calG(i)$, we obtain as in the first part of the proof of Theorem~\ref{thmGi2} that
		$$\phi(K)=\int_{\Grass_{i}} \Klain_\phi(E)\; d\mu_K(E),$$
	but now with some positive Borel measure $\mu_K$.
	This immediately yields (\ref{eqpos}).

	Conversely, assume that (\ref{eqpos}) holds. Note that the subspace $\img(\Klain) \subset C(\Grass_i)$ is adapted.
	In fact, we have $\Klain_\phi=\|\Klain_\phi\|-(\|\Klain_\phi\|-\Klain_\phi)$ for any $\phi\in\Val_i^+$; the other conditions are trivial.
	Hence the positive linear functional $f\mapsto \Klain^{-1}(f)(K)$ defined on $\img(\Klain)$
	can be represented by some positive Radon measure $\mu_K$ on $\Grass_i$.
	Thus, $\phi(K)=\int_{\Grass_{i}} \Klain_\phi(E)\; d\mu_K(E)$ for any $\phi\in\Val_i^+$ and as in the proof of Theorem~\ref{thmGi2}
	we conclude that $K\in\calK(i)$.
\end{proof}

It is well-known that $\calK(1)$, the class of zonoids, is a closed subset of $\calK(\RR^n)$, see e.g.\ \cite{schneider93}.
The same holds true for all the other classes $\calK(i)$, $0<i<n$.
The second named author was informed by W.~Weil that this result can be deduced from \cite{goodey_weil91}*{Corollary 5.2}.
We prefer to give a new proof using Theorem~\ref{thmKi}.

\begin{corollary}
	For $0<i<n$ the class $\calK(i)$ is a closed subset of $\calK(\RR^n)$.
\end{corollary}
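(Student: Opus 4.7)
The plan is to exploit the valuation-theoretic characterization of $\calK(i)$ provided by Theorem~\ref{thmKi}. The key observation is that the defining property in Theorem~\ref{thmKi}, namely
\[
\Klain_\phi \geq 0 \ \Longrightarrow\ \phi(K)\geq 0 \quad \text{for every } \phi\in\Val_i^+,
\]
is manifestly preserved under Hausdorff limits, because each individual valuation $\phi\in\Val_i^+$ is continuous by definition.

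Concretely, I would proceed as follows. Let $(K_j)_{j\in\NN}$ be a sequence in $\calK(i)$ converging in the Hausdorff metric to some $K\in\calK(\RR^n)$. First, I would verify that $K$ is centrally symmetric: since each $K_j$ satisfies $K_j=-K_j$ and the map $L\mapsto -L$ is continuous with respect to the Hausdorff metric, $K=-K$ follows by passing to the limit. Next, fix any $\phi\in\Val_i^+$ with $\Klain_\phi\geq 0$. By Theorem~\ref{thmKi} applied to each $K_j\in\calK(i)$, one has $\phi(K_j)\geq 0$ for every $j$. Continuity of $\phi$ then gives $\phi(K)=\lim_{j\to\infty}\phi(K_j)\geq 0$. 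Since $\phi$ was arbitrary, $K$ satisfies the implication in \eqref{eqpos}, and Theorem~\ref{thmKi} applied in the other direction yields $K\in\calK(i)$.

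There is no real obstacle in this argument; the only minor point to be careful about is that Theorem~\ref{thmKi} is stated for centrally symmetric $K$, so the preservation of central symmetry under Hausdorff limits must be recorded explicitly before invoking it on the limit body. Apart from that, the proof is essentially a one-line consequence of the valuation characterization together with the continuity built into the definition of $\Val_i^+$.
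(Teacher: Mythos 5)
Your proof is correct and follows essentially the same approach as the paper: invoke Theorem~\ref{thmKi} to translate membership in $\calK(i)$ into a positivity condition on valuations, then pass to the limit using the continuity of $\phi\in\Val_i^+$. The only addition is the explicit (and harmless) remark that central symmetry survives Hausdorff limits, which the paper leaves implicit.
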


\begin{proof}
	Let $K_m\in\calK(i)$, $m\geq0 $, be a sequence of convex bodies converging to a convex body $K$.
	If $\phi\in\Val^+_i$ satisfies $\Klain_\phi\geq 0$, then $\phi(K)=\lim_{m\rightarrow\infty} \phi(K_m)\geq0$, by Theorem~\ref{thmKi}.
	Consequently, $K\in\calK(i)$ and we conclude that $\calK(i)$ is closed.
\end{proof}

	\section{Positive valuations and McMullen's decomposition theorem}
		Let $\rmR(K)$ denote the circumradius and $\rmr(K)$ the inradius of a convex body $K\in \calK(\RR^n)$.
If $K$ has empty interior, then by definition $\rmr(K)$ is computed inside $\aff K$.
The successive outer and inner radii of a convex body $K\in\calK(\RR^n)$ are the nonnegative numbers defined by
	$$\rmR_i(K)=\min_{E\in\Grass_i} \rmR(K|E)\qquad\quad \rmr_i(K)=\max_{F\in \overline\Grass_i} \rmr(K\cap F)$$
for $i=1,\ldots,n$.
Observe that $\rmR_n(K)=\rmR(K)$, $\rmr_n(K)=\rmr(K)$, and
	$$\rmR_1(K)\leq \rmR_2(K)\leq \cdots \leq \rmR_n(K)\quad \text{and}\quad \rmr_1(K)\geq \rmr_2(K)\geq\ldots \geq \rmr_n(K)$$
for any $K\in\calK(\RR^n)$.
It was shown by Perel\cprime man \cite{perelman87} that
\begin{equation}\label{eqperelman}
	\frac{\rmR_{n-i+1}(K)}{\rmr_i(K)}\leq i+1.
\end{equation}
For more information on successive radii see e.g.\ \cites{gonzalez_hernandez,henk_hernandes08} and the references there.

\begin{lemma}\label{lempos}
	Let $\phi\in \Val_1^+$ and suppose that $\Klain_\phi >0$.
	Then there exist constants $c_0,c_1\in\RR$ such that $c_0+\phi(K)+c_1 V_2(K)\geq 0$ for every $K\in\calK(\RR^n)$.
\end{lemma}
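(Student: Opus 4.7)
The strategy is to combine two ingredients. On the one hand, $\Klain_\phi$ is continuous and strictly positive on the compact manifold $\Grass_1$, so $\Klain_\phi\geq c$ for some $c>0$, which means that $\phi$ takes strictly positive values on non-degenerate line segments. On the other hand, $V_2$ is $2$-homogeneous while $\phi$ is $1$-homogeneous, so the quadratic term $c_1V_2(K)$ can always absorb a linear loss once $K$ is sufficiently ``two-dimensionally thick''. The plan is to use the first ingredient together with a Blaschke compactness argument to handle bodies that are close to segments, and the second to handle the rest.

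The core step is the following localized positivity claim: \emph{there exist $\beta,\gamma>0$ such that every $K\in\calK(\RR^n)$ with $\rmR(K)=1$ and $V_2(K)\leq \beta$ satisfies $\phi(K)\geq \gamma$.} I would prove this by contradiction. Assume there is a sequence $(K_m)$ with $\rmR(K_m)=1$, $V_2(K_m)\to 0$, and $\phi(K_m)\to \alpha\leq 0$. After translating so that the circumball of each $K_m$ is centred at the origin (legitimate by the translation-invariance of $\phi$, $V_2$, and $\rmR$), the bodies lie in $B^n$, and Blaschke's selection theorem provides a Hausdorff limit $K_\infty\subset B^n$. By continuity, $\rmR(K_\infty)=1$ and $V_2(K_\infty)=0$; the latter forces $\dim K_\infty\leq 1$, and combined with the former makes $K_\infty$ a segment of length $2$. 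Hence $\phi(K_\infty)\geq 2c>0$, contradicting $\phi(K_m)\to \alpha\leq 0$.

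Given the claim, the conclusion follows from a short case analysis based on $r:=\rmR(K)$. Set $M:=\|\phi\|$, so that $|\phi(K)|\leq Mr$ by $1$-homogeneity, and choose $c_0:=M$ and $c_1:=M/(4\beta)$. If $r\leq 1$, then $\phi(K)\geq -M$ and the bound reduces to $c_1V_2(K)\geq 0$. If $r>1$ and $V_2(K/r)\leq \beta$, the claim together with $1$-homogeneity yields $\phi(K)=r\phi(K/r)\geq r\gamma\geq 0$, and the bound is immediate. Finally, if $r>1$ and $V_2(K/r)>\beta$, then $V_2(K)\geq r^2\beta$, and a short computation shows
$$c_0+\phi(K)+c_1V_2(K)\geq M-rM+\frac{M}{4}r^2=M\left(\frac{r}{2}-1\right)^2\geq 0.$$

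The main obstacle is the compactness argument in the key step: one must use translation-invariance to bring the $K_m$ into a fixed compact region so that Blaschke applies, and then rely on the continuity of the circumradius---not only of $V_2$---to ensure that the limit $K_\infty$ is a genuine segment of positive length rather than a point. Everything else reduces to $1$-homogeneity and elementary manipulations.
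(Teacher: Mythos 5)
Your proof is correct. The overall strategy is the same as the paper's: normalize by the circumradius, split into the case where the normalized body is ``two-dimensionally thin'' (then $\phi$ is already positive because the body is close to a segment and $\Klain_\phi>0$) and the case where it is ``two-dimensionally thick'' (then $c_1V_2$ absorbs the linear term $-\|\phi\|\rmR(K)$), and check the final inequality by elementary manipulations. Where you genuinely diverge is in how the ``thin $\Rightarrow \phi>0$'' step is established. The paper quantifies thinness via the second inner radius $\rmr_2(K)/\rmR(K)$, invokes Perel{\cprime}man's inequality \eqref{eqperelman} to pass from small $\rmr_2$ to small $\rmR_{n-1}$, and then uses the uniform continuity of $\phi$ on a fixed compact set to conclude; you instead quantify thinness by $V_2(K/\rmR(K))$ directly and deduce the bound by a soft Blaschke compactness contradiction, using the continuity of the circumradius and of $V_2$ and the fact that $V_2=0$ forces dimension at most one. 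Your route avoids successive radii and Perel{\cprime}man's inequality entirely, and as a minor bonus it handles $n=2$ uniformly rather than by a separate appeal to Corollary~\ref{corinvklain}; the paper's route has the advantage of producing explicit constants $c_0,c_1$ in terms of a modulus of continuity without passing through an abstract existence argument.
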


\begin{proof}
	If $n=2$, then the lemma follows immediately from Corollary~\ref{corinvklain}.  We assume therefore $n\geq 3$.
	Let $B^n\subset \RR^n$ denote the Euclidean unit ball and put
		$$\calM=\{K\in\calK(\RR^n): K\subset B^n,\ 2\rmR(K)\geq 1\}$$
	and
		$$\calN=\{K\in\calM: \dim K=1\}.$$
	Both $\calM$ and $\calN$ are compact. Set $\varepsilon=\min_{K\in\calN} \phi(K)>0$.
	Since $\phi$ is continuous, $\phi$ is uniformly continuous on compact sets.
	Hence there exists $0<\eta< \frac{1}{12}$ such that $d_H(K,L)< 6\eta$ implies $|\phi(K)-\phi(L)|<\varepsilon$ for all $K,L\in\calM$.
	Put
		$$c_0=\|\phi\|\quad \text{and}\quad c_1=\frac{\|\phi\|}{\pi \eta^2}.$$

	Let $K\in\calK(\RR^n)$ and suppose that $\rmR(K)>1$ and $\frac{\rmr_2(K)}{\rmR(K)}<\eta$.
	Put $K'=\frac{1}{\rmR(K)} K+x$ for a suitable $x\in\RR^n$ such that $K'\in \calM$.
	Clearly, $\rmR(K')=1$ and $\rmr_2(K')<\eta$.
	Using \eqref{eqperelman} with $i=2$, we deduce that $\rmR_{n-1}(K')< 3\eta$.
	By definition, $R_{n-1}(K')<3\eta$ implies that $K'$ is contained in a cylinder of radius less than $3\eta$.
	Let the axis of this cylinder be parallel to, say, $u\in S^{n-1}$.
	Choose $p_1,p_2\in K'$ such that $u\cdot p_1=h(K',u)$ and $-u\cdot p_2=h(K',-u)$ and put $L'= [p_1,p_2]$,
	where $[p_1,p_2]$ denotes the line segment between $p_1$ and $p_2$.
	Thus, there exists a convex body $L'\subset B^n$ with $\dim L'=1 $ such that $d_H(K',L')<6\eta$.
	Since
		$$\rmR(L')\geq \rmR(K')-d_H(K',L')> 1-6\eta>\frac{1}{2},$$
	we conclude that $L'\in\calN$.
	Therefore we have
		$$\phi(K)=\rmR(K)\phi(K')> \rmR(K)(\phi(L')-\varepsilon)\geq 0.$$

	Now suppose that $\rmR(K)>1$ and $\frac{\rmr_2(K)}{\rmR(K)}\geq\eta$.
	From the monotonicity of the intrinsic volumes and the trivial estimate $|\phi(K)|\leq \|\phi\|\rmR(K)$, we obtain
		$$V_2(K)\geq \pi \rmr_2(K)^2\geq \pi\eta^2\rmR(K)^2\geq \pi\eta^2\rmR(K)\geq \frac{\pi\eta^2}{\|\phi\|} |\phi(K)|.$$

	Finally let $K\in\calK(\RR^n)$ be such that $\rmR(K)\leq 1$.
	In this case we obviously have
		$$|\phi(K)|\leq \|\phi\|.$$
	In any case we see that $c_0+\phi(K)+c_1 V_2(K)\geq 0$, which proves the lemma.
\end{proof}

The following theorem shows that an analog of McMullen's decomposition theorem
does not hold true in the class of positive, translation-invariant, continuous valuations.

\begin{theorem}\label{thmposmc}
	If $n\geq3$, then there exists a positive, even, translation-invariant, continuous valuation on $\calK(\RR^n)$
	such that not all of its homogeneous components are positive.
\end{theorem}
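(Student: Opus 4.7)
The plan is to exploit the failure of monotonicity of the inverse Klain map in degree $1$---which holds because $n \geq 3$ forces $1 \neq n-1$---and to combine it with the compensation estimate of Lemma~\ref{lempos}. The target positive valuation will be of the form $c_0 + \phi + c_1 V_2$, and its degree-$1$ component $\phi$ will fail to be positive.

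First I would invoke Corollary~\ref{corinvklain}: since $1 \neq n-1$, the inverse Klain map $\Klain^{-1}\colon \img(\Klain) \to \Val_1^+$ is not monotone. Unpacking this, one obtains $\phi_0 \in \Val_1^+$ with $\Klain_{\phi_0} \geq 0$ pointwise on $\Grass_1$ but $\phi_0(K_0) < 0$ for some $K_0 \in \calK(\RR^n)$. Because $\phi_0$ is translation-invariant and $1$-homogeneous, $K_0$ cannot be a singleton, so $V_1(K_0) > 0$.

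Next I would make the Klain function strictly positive by a harmless perturbation. Since $V_1 \in \Val_1^+$ has constant Klain function $\Klain_{V_1} \equiv 1$, for any $\varepsilon \in \bigl(0,\, -\phi_0(K_0)/V_1(K_0)\bigr)$ the valuation
\[
\phi := \phi_0 + \varepsilon V_1 \in \Val_1^+
\]
satisfies $\Klain_\phi > 0$ on all of $\Grass_1$ and $\phi(K_0) < 0$. Applying Lemma~\ref{lempos} to $\phi$ yields constants $c_0, c_1 \in \RR$ such that
\[
\psi := c_0 + \phi + c_1 V_2
\]
is a positive valuation. Being a sum of even, translation-invariant, continuous valuations, $\psi$ has all of these properties; moreover, by the uniqueness of McMullen's decomposition (Theorem~\ref{thmmcmullen}) its homogeneous components are precisely $c_0$ in degree $0$, $\phi$ in degree $1$, and $c_1 V_2$ in degree $2$. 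The degree-$1$ component $\phi$ satisfies $\phi(K_0) < 0$, so it is not positive, which is exactly the claim of the theorem.

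I do not foresee any significant obstacle, since the technical content is already packaged in Corollary~\ref{corinvklain} (which rests on Theorems~\ref{thmGi2} and~\ref{proppoly}) and in Lemma~\ref{lempos}. The only minor point to verify is that $\varepsilon$ can be chosen simultaneously small enough to keep $\phi(K_0) < 0$ and positive enough to guarantee $\Klain_\phi > 0$; the interval displayed above does the job.
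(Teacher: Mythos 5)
Your proof is correct and takes essentially the same route as the paper's: invoke the non-monotonicity of $\Klain^{-1}$ in degree $1$ from Corollary~\ref{corinvklain}, perturb by a small multiple of $V_1$ to make the Klain function strictly positive while keeping $\phi(K_0)<0$, then apply Lemma~\ref{lempos} to build the positive valuation $c_0+\phi+c_1V_2$ whose degree-$1$ component $\phi$ is not positive. Your explicit $\varepsilon$-interval is a minor elaboration of the paper's ``$\phi+tV_1$ with $t>0$ small'' but changes nothing of substance.
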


\begin{proof}
	Since the inverse of the Klain map $\Klain:\Val_1^+\rightarrow C(\Grass_1)$ is not monotone by Corollary~\ref{corinvklain},
	there exists $\phi\in\Val_1^+$ such that $\Klain_\phi\geq 0$ but $\phi(K)<0$ for some $K\in\calK(\RR^n)$.
	If necessary replacing $\phi$ by $\phi+tV_1$ with $t>0$ small, we may assume that $\Klain_\phi> 0$ and $\phi(K)<0$ for some $K\in\calK(\RR^n)$.
	By Lemma \ref{lempos} there exist constants $c_0,c_1\in\RR$ such that
		$$c_0+\phi+c_1 V_2$$
	is a positive valuation.
	This gives the desired counterexample.
\end{proof}

As an immediate consequence of Theorem~\ref{thmposmc} we obtain that there exists no McMullen decomposition for Minkowski valuations if $n\geq 3$.
\begin{corollary}
	If $n\geq3$, then there exists an even, translation-invariant, continuous Minkowski valuation $\Phi:\calK(\RR^n)\rightarrow\calK(\RR^n)$
	which cannot be decomposed into a sum of homogeneous Minkowski valuations.
\end{corollary}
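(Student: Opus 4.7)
The plan is to lift the scalar counterexample of Theorem~\ref{thmposmc} to a Minkowski-valued one by using the Euclidean unit ball as a shape template. Explicitly, let $\phi\in\Val$ be a positive, even, translation-invariant, continuous valuation whose homogeneous decomposition $\phi=\phi_0+\cdots+\phi_n$ contains at least one component $\phi_{i_0}$ that fails to be positive, as provided by Theorem~\ref{thmposmc}. I define
$$\Phi K:=\phi(K)\,B^n,\qquad K\in\calK(\RR^n).$$
Because $\phi(K)\geq 0$, the scalar multiple $\phi(K)B^n$ is a bona fide convex body, and since $(a+b)B^n=aB^n+bB^n$ for nonnegative scalars, the scalar valuation identity for $\phi$ translates directly into the Minkowski valuation property of $\Phi$. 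Evenness, translation-invariance, and continuity in the Hausdorff metric are all inherited from the corresponding properties of $\phi$.

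Next, I would argue by contradiction. Suppose $\Phi$ admits a decomposition $\Phi=\Phi_0+\cdots+\Phi_n$ into continuous, translation-invariant Minkowski valuations with $\Phi_i$ homogeneous of degree $i$. For every fixed $x\in\RR^n$ the assignment $K\mapsto h(\Phi_i K,x)$ lies in $\Val_i$, and evaluating support functions yields
$$\phi(K)|x|=h(\Phi K,x)=\sum_{i=0}^{n} h(\Phi_i K,x).$$
Reading this as a homogeneous decomposition of the scalar valuation $K\mapsto \phi(K)|x|$ and invoking the uniqueness in McMullen's theorem (Theorem~\ref{thmmcmullen}), one obtains $h(\Phi_i K,x)=\phi_i(K)|x|$ for every $K\in\calK(\RR^n)$ and every $x\in\RR^n$.

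The contradiction is then immediate: fix $K_0\in\calK(\RR^n)$ with $\phi_{i_0}(K_0)<0$. Then the function $x\mapsto h(\Phi_{i_0}K_0,x)=\phi_{i_0}(K_0)|x|$ is a strictly negative multiple of the Euclidean norm on $\RR^n\setminus\{0\}$; it is superadditive rather than subadditive, and hence cannot be the support function of any convex body, contradicting $\Phi_{i_0}K_0\in\calK(\RR^n)$. No genuine analytic obstacle appears along the way: all mathematical content has been concentrated in Theorem~\ref{thmposmc}, and the present argument is a routine transfer from scalar to Minkowski-valued valuations via the embedding $t\mapsto tB^n$, combined with the uniqueness half of McMullen's decomposition.
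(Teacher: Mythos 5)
Your proposal is correct and follows essentially the same route as the paper: define $\Phi K=\phi(K)B^n$, compare support functions, invoke the uniqueness in McMullen's decomposition, and derive the contradiction that $\phi_{i_0}(K_0)|x|$ is not a support function. The only minor difference is that you spell out the superadditivity obstruction where the paper simply asserts it is ``clearly no support function.''
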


\begin{proof}
	Let $\phi$ be a positive, even, translation-invariant, continuous valuation on $\calK(\RR^n)$
	such that one of its homogeneous components $\phi_i$, say $\phi_{i_0}$, is not a positive valuation.
	Such a valuation exists by Theorem~\ref{thmposmc}.
	Define an even, translation-invariant, continuous Minkowski valuation by
		$$ \Phi K = \phi(K) B^n $$
	for all $K \in \calK(\RR^n)$.
	Assume that $\Phi$ can be written as a sum of homogeneous components, $\Phi=\Phi_0+\cdots+\Phi_n$.
	This implies
		$$\sum_{i=0}^nh(\Phi_i K,x)=h(\Phi K,x)=\phi(K)|x|=\sum_{i=0}^n \phi_i(K)|x|,$$
	and hence by the uniqueness of McMullen's decomposition, we obtain in particular $h(\Phi_{i_0} K,x)=\phi_{i_0}(K)|x|$.
	This is a contradiction, since if $\phi_{i_0}(K)<0$, then $\phi_{i_0}(K)|x|$ is clearly no support function.
\end{proof}

	\section*{Acknowledgment}
		The first named author was supported by the  Austrian Science Fund (FWF), within
		the project ``Linearly intertwining maps on convex bodies'', Project number: P\,23639-N18.

		The second named author was supported by the  Austrian Science Fund (FWF), within
		the project ``Minkowski valuations and geometric inequalities'', Project number: P\,22388-N13.

	\begin{bibdiv}
		\begin{biblist}
			\bib{abardia12}{article}{
   author={Abardia, J.},
   title={Difference bodies in complex vector spaces},
	eprint={ 	arXiv:1206.0880v1 [math.DG]},
}

\bib{abardia_bernig11}{article}{
   author={Abardia, J.},
   author={Bernig, A.},
   title={Projection bodies in complex vector spaces},
   journal={Adv. Math.},
   volume={227},
   date={2011},
   number={2},
   pages={830--846},
}

\bib{Alesker1999}{article}{
   author={Alesker, S.},
   title={Continuous rotation invariant valuations on convex sets},
   journal={Ann. of Math. (2)},
   volume={149},
   date={1999},
   number={3},
   pages={977--1005},
}

\bib{alesker00}{article}{
   author={Alesker, S.},
   title={On P. McMullen's conjecture on translation invariant valuations},
   journal={Adv. Math.},
   volume={155},
   date={2000},
   number={2},
   pages={239--263},
}

\bib{alesker01}{article}{
   author={Alesker, S.},
   title={Description of translation invariant valuations on convex sets
   with solution of P. McMullen's conjecture},
   journal={Geom. Funct. Anal.},
   volume={11},
   date={2001},
   number={2},
   pages={244--272},

}

\bib{alesker03}{article}{
   author={Alesker, S.},
   title={Hard Lefschetz theorem for valuations, complex integral geometry,
   and unitarily invariant valuations},
   journal={J. Differential Geom.},
   volume={63},
   date={2003},
   number={1},
   pages={63--95},
}

\bib{alesker04}{article}{
   author={Alesker, S.},
   title={The multiplicative structure on continuous polynomial valuations},
   journal={Geom. Funct. Anal.},
   volume={14},
   date={2004},
   number={1},
   pages={1--26},
}

\bib{alesker07}{article}{
   author={Alesker, S.},
   title={Theory of valuations on manifolds: a survey},
   journal={Geom. Funct. Anal.},
   volume={17},
   date={2007},
   number={4},
   pages={1321--1341},
}

\bib{alesker10}{article}{
   author={Alesker, S.},
   title={Valuations on manifolds and integral geometry},
   journal={Geom. Funct. Anal.},
   volume={20},
   date={2010},
   number={5},
   pages={1073--1143},
}

\bib{alesker11}{article}{
   author={Alesker, S.},
   title={A Fourier-type transform on translation-invariant valuations on
   convex sets},
   journal={Israel J. Math.},
   volume={181},
   date={2011},
   pages={189--294},
}

\bib{alesker12}{article}{
	author={Alesker, S.},
	title={On the extendability by continuity of valuations on convex polytopes},
	eprint={	arXiv:1205.1336v1 [math.MG]},
}

\bib{alesker_bernstein04}{article}{
   author={Alesker, S.},
   author={Bernstein, J.},
   title={Range characterization of the cosine transform on higher
   Grassmannians},
   journal={Adv. Math.},
   volume={184},
   date={2004},
   number={2},
   pages={367--379},
}

\bib{alesker_etal11}{article}{
   author={Alesker, S.},
   author={Bernig, A.},
   author={Schuster, F. E.},
   title={Harmonic analysis of translation invariant valuations},
   journal={Geom. Funct. Anal.},
   volume={21},
   date={2011},
   number={4},
   pages={751--773},
}

\bib{bernig09a}{article}{
   author={Bernig, A.},
   title={A product formula for valuations on manifolds with applications to
   the integral geometry of the quaternionic line},
   journal={Comment. Math. Helv.},
   volume={84},
   date={2009},
   number={1},
   pages={1--19},
}
		
\bib{bernig09b}{article}{
   author={Bernig, A.},
   title={A Hadwiger-type theorem for the special unitary group},
   journal={Geom. Funct. Anal.},
   volume={19},
   date={2009},
   number={2},
   pages={356--372},
}

\bib{bernig_fu11}{article}{
	title={Hermitian integral geometry},
	author={Bernig, A.},
	author={Fu, J. H. G.},
	journal={Ann. of Math. (2)},
  	volume={173},
  	date={2011},
  	number={2},
   	pages={907--945},

}

\bib{bernig_etal}{article}{
	author={Bernig, A.},
	author={Fu, J. H. G.},
	author={Solanes, G.},
	title={Integral geometry of complex space forms},
	eprint={	arXiv:1204.0604v2 [math.DG]},
}

\bib{choquet69}{book}{
	title={Lectures on analysis. Vol. II: Representation theory.},
	author={Choquet, G.},
	publisher={CW. A. Benjamin, Inc.},
	address={New York-Amsterdam},
	date={1969}
}
\bib{gonzalez_hernandez}{article}{
   author={Gonz\'alez, B.},
   author={Hern{\'a}ndez Cifre, M. A.},
   title={Minkowski addition and successive radii},
   journal={Monatsh. Math.},
   status={to appear},
}

\bib{goodey_weil91}{article}{
   author={Goodey, P.},
   author={Weil, W.},
   title={Centrally symmetric convex bodies and Radon transforms on higher
   order Grassmannians},
   journal={Mathematika},
   volume={38},
   date={1991},
   number={1},
   pages={117--133},
}

\bib{goodey_zhang98}{article}{
   author={Goodey, P.},
   author={Zhang, G.},
   title={Inequalities between projection functions of convex bodies},
   journal={Amer. J. Math.},
   volume={120},
   date={1998},
   number={2},
   pages={345--367},
}

\bib{haberl08}{article}{
   author={Haberl, C.},
   title={$L_p$ intersection bodies},
   journal={Adv. Math.},
   volume={217},
   date={2008},
   number={6},
   pages={2599--2624},
}

\bib{haberl09}{article}{
   author={Haberl, C.},
   title={Star body valued valuations},
   journal={Indiana Univ. Math. J.},
   volume={58},
   date={2009},
   number={5},
   pages={2253--2276},
}

\bib{haberl11}{article}{
   author={Haberl, C.},
   title={Blaschke valuations},
   journal={Amer. J. Math.},
   volume={133},
   date={2011},
   number={3},
   pages={717--751},
}

\bib{haberl_ludwig06}{article}{
   author={Haberl, C.},
   author={Ludwig, M.},
   title={A characterization of $L_p$ intersection bodies},
   journal={Int. Math. Res. Not.},
   date={2006},
   pages={Art. ID 10548, 29},
}

\bib{haberl_parapatits}{article}{
   author={Haberl, C.},
   author={Parapatits, L.},
   title={Valuations and surface area measures},
   journal={J. Reine Angew. Math.},
   status={to appear},
}

\bib{Hadwiger1957}{book}{
   author={Hadwiger, H.},
   title={Vorlesungen \"uber Inhalt, Oberfl\"ache und Isoperimetrie},
   language={German},
   publisher={Springer-Verlag},
   place={Berlin},
   date={1957},
   pages={xiii+312},
}

\bib{helgason00}{book}{
   author={Helgason, S.},
   title={Groups and geometric analysis},
   series={Mathematical Surveys and Monographs},
   volume={83},
   note={Integral geometry, invariant differential operators, and spherical
   functions;
   Corrected reprint of the 1984 original},
   publisher={American Mathematical Society},
   place={Providence, RI},
   date={2000},
}

\bib{helgason11}{book}{
   author={Helgason, S.},
   title={Integral geometry and Radon transforms},
   publisher={Springer},
   place={New York},
   date={2011},
}

\bib{henk_hernandes08}{article}{
   author={Henk, M.},
   author={Hern{\'a}ndez Cifre, M. A.},
   title={Intrinsic volumes and successive radii},
   journal={J. Math. Anal. Appl.},
   volume={343},
   date={2008},
   number={2},
   pages={733--742}
}

\bib{klain95}{article}{
   author={Klain, D. A.},
   title={A short proof of Hadwiger's characterization theorem},
   journal={Mathematika},
   volume={42},
   date={1995},
   number={2},
   pages={329--339},
}

\bib{klain00}{article}{
   author={Klain, D. A.},
   title={Even valuations on convex bodies},
   journal={Trans. Amer. Math. Soc.},
   volume={352},
   date={2000},
   number={1},
   pages={71--93},
}

\bib{ludwig02}{article}{
   author={Ludwig, M.},
   title={Projection bodies and valuations},
   journal={Adv. Math.},
   volume={172},
   date={2002},
   number={2},
   pages={158--168},
}

\bib{ludwig03}{article}{
   author={Ludwig, M.},
   title={Ellipsoids and matrix-valued valuations},
   journal={Duke Math. J.},
   volume={119},
   date={2003},
   number={1},
   pages={159--188},
}

\bib{ludwig05}{article}{
   author={Ludwig, M.},
   title={Minkowski valuations},
   journal={Trans. Amer. Math. Soc.},
   volume={357},
   date={2005},
   number={10},
   pages={4191--4213},
}

\bib{ludwig06}{article}{
   author={Ludwig, M.},
   title={Intersection bodies and valuations},
   journal={Amer. J. Math.},
   volume={128},
   date={2006},
   number={6},
   pages={1409--1428},
}

\bib{LutwakYangZhang2000_1}{article}{
   author={Lutwak, E.},
   author={Yang, D.},
   author={Zhang, G.},
   title={A new ellipsoid associated with convex bodies},
   journal={Duke Math. J.},
   volume={104},
   date={2000},
   number={3},
   pages={375--390},
}

\bib{LutwakYangZhang2000_2}{article}{
   author={Lutwak, E.},
   author={Yang, D.},
   author={Zhang, G.},
   title={$L_p$ affine isoperimetric inequalities},
   journal={J. Differential Geom.},
   volume={56},
   date={2000},
   number={1},
   pages={111--132},
}

\bib{LutwakYangZhang2002}{article}{
   author={Lutwak, E.},
   author={Yang, D.},
   author={Zhang, G.},
   title={The Cramer-Rao inequality for star bodies},
   journal={Duke Math. J.},
   volume={112},
   date={2002},
   number={1},
   pages={59--81},
}

\bib{LutwakYangZhang2010}{article}{
   author={Lutwak, E.},
   author={Yang, D.},
   author={Zhang, G.},
   title={Orlicz centroid bodies},
   journal={J. Differential Geom.},
   volume={84},
   date={2010},
   number={2},
   pages={365--387},
}

\bib{mcmullen70}{article}{
   author={McMullen, P.},
   title={Polytopes with centrally symmetric faces},
   journal={Israel J. Math.},
   volume={8},
   date={1970},
   pages={194--196},
}

\bib{mcmullen76}{article}{
   author={McMullen, P.},
   title={Polytopes with centrally symmetric facets},
   journal={Israel J. Math.},
   volume={23},
   date={1976},
   number={3-4},
   pages={337--338},
}

\bib{mcmullen77}{article}{
   author={McMullen, P.},
   title={Valuations and Euler-type relations on certain classes of convex
   polytopes},
   journal={Proc. London Math. Soc. (3)},
   volume={35},
   date={1977},
   number={1},
   pages={113--135},
}

\bib{mcmullen83}{article}{
   author={McMullen, P.},
   title={Weakly continuous valuations on convex polytopes},
   journal={Arch. Math. (Basel)},
   volume={41},
   date={1983},
   number={6},
   pages={555--564},
}

\bib{parapatits_1}{article}{
   author={Parapatits, L.},
   title={$SL(n)$-contravariant $L_p$-Minkowski valuations},
   journal={Trans. Amer. Math. Soc.},
   status={to appear},
}

\bib{parapatits_schuster}{article}{
   author={Parapatits, L.},
   author={Schuster, F. E.},
   title={The Steiner formula for Minkowski valuations},
   journal={Adv. in Math.},
   volume={230},
   date={2012},
   pages={978--994},
}

\bib{perelman87}{article}{
   author={Perel{\cprime}man, G. Ya.},
   title={On the $k$-radii of a convex body},
   language={Russian},
   journal={Sibirsk. Mat. Zh.},
   volume={28},
   date={1987},
   number={4},
   pages={185--186},
}

\bib{schneider74}{article}{
   author={Schneider, R.},
   title={Equivariant endomorphisms of the space of convex bodies},
   journal={Trans. Amer. Math. Soc.},
   volume={194},
   date={1974},
   pages={53--78},
}

\bib{schneider_weil83}{article}{
   author={Schneider, R.},
   author={Weil, W.},
   title={Zonoids and related topics},
   conference={
      title={Convexity and its applications},
   },
   book={
      publisher={Birkh\"auser},
      place={Basel},
   },
   date={1983},
   pages={296--317},
}

\bib{schneider93}{book}{
   author={Schneider, R.},
   title={Convex bodies: the Brunn-Minkowski theory},
   series={Encyclopedia of Mathematics and its Applications},
   volume={44},
   publisher={Cambridge University Press},
   place={Cambridge},
   date={1993},
}

\bib{schneider97}{article}{
   author={Schneider, R.},
   title={Intrinsic volumes in Minkowski spaces},
   note={II International Conference in ``Stochastic Geometry, Convex Bodies
   and Empirical Measures'' (Agrigento, 1996)},
   journal={Rend. Circ. Mat. Palermo (2) Suppl.},
   number={50},
   date={1997},
   pages={355--373},
}

\bib{schneider_schuster06}{article}{
   author={Schneider, R.},
   author={Schuster, F. E.},
   title={Rotation equivariant Minkowski valuations},
   journal={Int. Math. Res. Not.},
   date={2006},
   pages={Art. ID 72894, 20},
}

\bib{schuster07}{article}{
	title={Convolutions and multiplier transformations},
	author={Schuster, F. E.},
	journal={Trans. Amer. Math. Soc.},
	volume={359},
	date={2007},
	number={11},
	pages={5567--5591},
}
\bib{schuster09}{article}{
	title={Crofton measures and Minkowski valuations},
	author={Schuster, F. E.},
	journal={Duke Math. J.},
	volume={154},
	date={2010},
	number={1},
	pages={1--30},
}
\bib{schuster_wannerer12}{article}{
	title={$GL(n)$ contravariant Minkowski valuations},
	author={Schuster, F. E.},
	author={Wannerer, T.},
	journal={Trans. Amer. Math. Soc.},
	volume={364},
	number={2},
	pages={815--826},
	date={2012}
}

\bib{shephard67}{article}{
   author={Shephard, G. C.},
   title={Polytopes with centrally symmetric faces},
   journal={Canad. J. Math.},
   volume={19},
   date={1967},
   pages={1206--1213},
}

\bib{weil79}{article}{
   author={Weil, W.},
   title={Centrally symmetric convex bodies and distributions. II},
   journal={Israel J. Math.},
   volume={32},
   date={1979},
   number={2-3},
   pages={173--182},
}
\bib{schneider_weil92}{book}{
   author={Schneider, R.},
   author={Weil, W.},
   title={Integralgeometrie},
   language={German},
   series={Teubner Skripten zur Mathematischen Stochastik. [Teubner Texts on
   Mathematical Stochastics]},
   publisher={B. G. Teubner},
   place={Stuttgart},
   date={1992},
}

\bib{wannerer11}{article}{
	title={$GL(n)$ covariant Minkowski valuations},
	author={Wannerer, T.},
	journal={Indiana Univ. Math. J.},
	status={to appear}
}

\bib{weil82}{article}{
   author={Weil, W.},
   title={Zonoide und verwandte Klassen konvexer K\"orper},
   language={German, with English summary},
   journal={Monatsh. Math.},
   volume={94},
   date={1982},
   number={1},
   pages={73--84},
}

		\end{biblist}
	\end{bibdiv}
\end{document}